\newcommand{\Prob}{\mathds{P}}
\newcommand{\E}{\mathds{E}}
\newcommand{\V}{\mathds{V}}
\newcommand{\R}{\mathbb{R}}
\newcommand{\N}{\mathbb{N}}
\newcommand{\abs}[1]{|{#1}|} 
\newcommand{\bigabs}[1]{\left|{#1}\right|} 
\newcommand{\one}{\mathds{1}}
\newcommand{\Bcal}{\mathcal{B}}
\newcommand{\Pcal}{\mathcal{P}}
\newcommand{\Gcal}{\mathcal{G}}
\newcommand{\Ncal}{\mathcal{N}}
\newcommand{\Tcal}{\mathcal{T}}
\newcommand*{\defeq}{\mathrel{\vcenter{\baselineskip0.5ex \lineskiplimit0pt
                     \hbox{\scriptsize.}\hbox{\scriptsize.}}}%
                     =}
\newcommand{\integrala}[2]{\left\langle{#1},\,{#2}\right\rangle}
\newcommand{\de}{\text{d}}
\newcommand{\opnorm}[1]{\|#1\|_{\mathrm{op}}}
\newcommand{\ubar}[1]{\underline{#1}}
\newcommand{\oneto}[1]{[{#1}]}
\DeclareMathOperator{\Cov}{Cov}
\DeclareMathOperator{\tr}{tr}
\newcommand{\PePe}[1]{\mathcal{PP}(#1)}
\theoremstyle{plain}
\newtheorem{lemma}{Lemma}
\newtheorem{theorem}[lemma]{Theorem}
\theoremstyle{definition}
\theoremstyle{remark}
\newcommand{\1}{\one}
\definecolor{darkblue}{rgb}{.1, 0.1,.8}
\definecolor{darkgreen}{rgb}{0,0.8,0.2}
\definecolor{darkred}{rgb}{.8, .1,.1}
\newcommand{\nto}{n \to \infty}
\newcommand{\as}{{\rm a.s.}}
\newcommand{\cas}{\stackrel{\rm a.s.}{\rightarrow}}
\newcommand{\eid}{\stackrel{d}{=}}
\begin{document}
\title[Gaussian Sample Covariance Matrices]
{Large Sample Covariance Matrices of Gaussian Observations with Uniform Correlation Decay}
\author[Michael Fleermann]{Michael Fleermann}
\author[Johannes Heiny]{Johannes Heiny}
\begin{abstract}
We derive the Marchenko-Pastur (MP) law for sample covariance matrices of the form $V_n=\frac{1}{n}XX^T$, where $X$ is a $p\times n$ data matrix and $p/n\to y\in(0,\infty)$ as $n,p \to \infty$. We assume the data in $X$ stems from a correlated joint normal distribution. In particular, the correlation acts both across rows and across columns of $X$, and we do not assume a specific correlation structure, such as separable dependencies. Instead, we assume that correlations converge uniformly to zero at a speed of $a_n/n$, where $a_n$ may grow mildly to infinity. We employ the method of moments tightly: We identify the exact condition on the growth of $a_n$ which will guarantee that the moments of the empirical spectral distributions (ESDs) converge to the MP moments. If the condition is not met, we can construct an ensemble for which all but finitely many moments of the ESDs diverge. We also investigate the operator norm of $V_n$ under a uniform correlation bound of $C/n^{\delta}$, where $C,\delta>0$ are fixed, and observe a phase transition at $\delta=1$. In particular, convergence of the operator norm to the maximum of the support of the MP distribution can only be guaranteed if $\delta>1$. The analysis leads to an example for which the MP law holds almost surely, but the operator norm remains stochastic in the limit, and we provide its exact limiting distribution.
\end{abstract}
\keywords{Sample covariance matrices, Marchenko-Pastur law, correlated Gaussian, operator norm}
\subjclass[2020]{Primary: 60B20. Secondary: 60F05, 60G55} 
\maketitle

\section{Introduction}

In contemporary statistical analyses, data sets are typically large both in the sample size and in the dimension of the observations.  Since many test statistics are based on the eigenvalues of the sample covariance matrix, their asymptotic study has gained wide popularity and led to applications in many fields of modern sciences. In the classical setting of data matrices with independent and identically distributed (i.i.d.)\ entries, the famous Marchenko-Pastur law \cite{marchenko:pastur:1967} describes the asymptotic distriution of these eigenvalues.

Ever since, the universality of the Marchenko-Pastur law is under close investigation: How far may we deviate from the classical i.i.d.\ assumption and still obtain the Marchenko-Pastur distribution  as a limit? For example, in practical applications it is plausible to assume that data may be correlated or at least dependent. Even up to today, results for matrices with dependencies spreading over all entries are rather sparse: The recent paper \cite{Bryson:Vershynin:Zhao:2021}, for example, establishes the Marchenko-Pastur law for data matrices with independent observations (columns), but where covariates within these columns may exhibit stochastic dependencies (but are assumed to be uncorrelated). They also provide an interesting review of previous literature in this direction. 

Many previous analyses (cf. \cite{Bryson:Vershynin:Zhao:2021, Yaskov:2016, bai:zhou:2008}) investigate the case where rows (resp.\ columns) of the data matrices are assumed to be independent and stochastic dependencies may only prevail within these rows (resp.\ columns). 
Studies which allow correlation to span both across rows and columns are sparse: In \cite{paul:silverstein:2009}, separable sample covariance matrices were studied, which are of the form $n^{-1}X_n X_n^T$ with $X_n=\sqrt{A_n}Y_n\sqrt{B_n}$, where $A_n$ is a $p\times p$ deterministic Hermitian and positive semidefinite matrix, $B_n$ is an $n\times n$ deterministic diagonal matrix with non-negative real-valued entries, and $Y_n$ has i.i.d.\ entries with mean zero and unit variance. In this model the covariance structure of $X_n$ is given by the Kronecker product $A_n\otimes B_n$, which is a $pn\times pn$ matrix. Since $A_n\otimes B_n$ is determined by the $p^2+n^2$ entries of $A_n$ and $B_n$, covariance structures determined by Kronecker products are somewhat limited in their variability. In other words, some covariance structures -- such as the equicovariant structure (in Theorem~\ref{thm:rankoneexample} below) -- cannot be expressed as a Kronecker product.
Another study where correlations may span both across rows and columns is \cite{Fleermann:Heiny:2020}, where the authors derived the Marchenko-Pastur law for data matrices filled with jointly correlated random spins stemming from the Curie-Weiss model from statistical physics. There, even for the case of non-vanishing correlations, the Marchenko-Pastur law is recovered. 

The present paper studies the case where data is jointly Gaussian distributed and where correlations may act both across rows and columns of the data matrix. In Theorem~\ref{thm:MPlaw}, we do not impose any conditions on the correlation structure except that we require all correlations to decay uniformly at a speed of $a_n/n$, where $a_n=o(n^{\epsilon})$ for all $\epsilon>0$. We employ the method of moments to derive our results and we show that if not $[\forall\,\epsilon >0:\, a_n= o(n^{\epsilon})]$, the Marchenko-Pastur law cannot be guaranteed by the method of moments. More precisely, there exist ensembles matching the condition, such that all sufficiently large moments of a subsequence of the ESDs diverge to infinity, allowing no conclusion about weak convergence. In Theorem~\ref{thm:opnorm}, we investigate the convergence of the operator norms of our ensembles for which we assume that correlations decay uniformly at rates $C/n^{\delta}$, where $C$ and $\delta$ are fixed in $(0,\infty)$. We derive the following phase transition: For $\delta>1$, the operator norm of the sample covariance matrices converges almost surely to the right endpoint of the support of the Marchenko-Pastur distribution. For $\delta<1$, there exist ensembles matching the condition such that the operator norm converges to infinity. For $\delta=1$, we provide an ensemble for which the operator norm remains stochastic in the limit, so in particular, it does not converge to a constant, nor to infinity. Finally, the case where all correlations are of the same size will be studied in depth in Theorem~\ref{thm:rankoneexample}, where we provide precise limit results for the operator norm of the sample covariance matrices.

 The rest of this paper is organized as follows. In Section~\ref{sec:setupresults}, we introduce our setup and present the main results, Theorems~\ref{thm:MPlaw},\,~\ref{thm:opnorm}, and~\ref{thm:rankoneexample}. Section~\ref{sec:moments} is devoted to the proof of Theorem~\ref{thm:MPlaw}, whereas Section~\ref{sec:Eval} contains the proofs of Theorem~\ref{thm:opnorm} and Theorem~\ref{thm:rankoneexample}.

\section{Setup and Results}
\label{sec:setupresults}
We assume that observations $x_1,\ldots,x_n$ are $p$-dimensional random vectors, so that we obtain a $p\times n$ data matrix $X_n$ with columns $x_i$.
Based on these data, we define the sample covariance matrix of dimension $p\times p$,
\[
V_n = \frac{1}{n}\sum_{k=1}^n x_kx_k^T = \frac{1}{n} X_n X_n^T,
\]
and analyze the eigenvalues of $V_n$ as its dimensions tend to infinity. To be precise, we assume that the number of observations $n$ and the number of covariates $p$ grow asymptotically proportionally with each other, so that $p/n \to y\in(0,\infty)$ as $n,p\to \infty$. This assumption is typical in random matrix theory, see for instance the monographs \cite{BaiSi, yao:zheng:bai:2015}.
Throughout this paper, we assume that $p$ is a function of $n$, i.e., $p=p(n)$, but for simplicity we suppress this dependence notationally. In the following, we write $\oneto{k}\defeq \{1,\ldots,k\}$ for any $k\in\N$. 

 We impose the following assumptions about the sequence of data matrices $(X_n)_n$:
\begin{enumerate}
\item[(A1)] \label{eq:unitvar} For all $n\in\N$, $X_n\sim \Ncal(0,\Sigma_n)$, where $\Sigma_n$ is a positive semidefinite $pn\times pn$ matrix, indexed by pairs $(a,b)\in\oneto{p}\times\oneto{n}$, so that
\[
\Sigma_n = (\Sigma_n((a,b),(c,d)))_{(a,b),(c,d)\in\oneto{p}\times\oneto{n}}
\]
and $\Sigma_n((a,b),(a,b))=1$ for all $n\in\N$ and $(a,b)\in\oneto{p}\times\oneto{n}$.
\item[(A2)] \label{eq:corrdecay} We assume that there exists a sequence $(a_n)_n$ in $\R_+$ with $a_n = o(n^{\epsilon})$ for all $\epsilon>0$ such that the sequence $(\Sigma_n)_n	$ satisfies:
	\begin{equation}
	\label{eq:decay}
	\forall\ (a,b)\neq(c,d)\in\oneto{p}\times\oneto{n}:\ \abs{\E X_n(a,b)X_n(c,d)} = \abs{\Sigma_n((a,b),(c,d))} \leq \frac{a_n}{n}.
	\end{equation}
\item[(A3)] There is a constant $y\in(0,\infty)$ such that $p/n\to y$ as $n\to\infty$.
\end{enumerate}
Assumption (A1) states that all observations are jointly normal and have variance $1$. Assumption (A2) can be considered the main assumption in our model: We do not require correlations to follow a specific structure, but we do require them to decay uniformly at a speed of $a_n/n$. 

For all $n\in\N$, we define $\mu_n$ to be the empirical spectral distribution (ESD) of $V_n\defeq n^{-1}X_n X_n^T$, i.e.\
\[
\mu_n = \frac{1}{p} \sum_{i=1}^p \delta_{\lambda_i(V_n)}, 
\]
where $\lambda_1(V_n)\leq \ldots \leq \lambda_p(V_n)=\lambda_{\max}(V_n)=\opnorm{V_n}$ are the eigenvalues of $V_n$ and $\delta_x$ denotes the Dirac measure at point $x\in \R$. Note that the probability measures $\mu_n$ are random, as they depend on the random eigenvalues of $V_n$. 

Notationally, if $\nu$ is a probability measure on $(\R,\Bcal)$ and $f:\R\to\R$ is $\nu$-integrable, we write $
\integrala{\nu}{f} \defeq \int_{\R} f \de\nu$,
where when in doubt, $x$ is the variable of integration, for example, if $k\in\N$, then $
\integrala{\nu}{x^k} \defeq \int_{\R} x^k \de\nu(x)$. Now as $\mu_n$ is a random probability measure, its $k$-th moment $\integrala{\mu_n}{x^k}$ is a real-valued random variable.
 
Our first goal is to see $\mu_n$ converge weakly almost surely to the Marchenko-Pastur distribution $\mu^{y}$ with ratio index $y$, where 
\[
\mu^{y}(\de x) = \frac{1}{2\pi xy}\sqrt{(b-x)(x-a)}\one_{(y_-,y_+)}(x) \de x + \left(1-\frac{1}{y}\right)\one_{y>1}\delta_0 (\de x),
\]
$y_- = (1-\sqrt{y})^2$, $y_+=(1+\sqrt{y})^2$. The moments of $\mu^{y}$ are given by (see \cite{BaiSi}, for example)
\begin{equation}
\label{eq:MPmoments}	
\forall\, k\in\N: \integrala{\mu^y}{x^k} = \sum_{r=0}^{k-1} \frac{y^r}{r+1}\binom{k}{r}\binom{k-1}{r}.
\end{equation} 

\begin{theorem} \label{thm:MPlaw} The following statements hold:
\begin{enumerate}[i)]
\item Let $(X_n)_n$ be an ensemble of jointly Gaussian observations satisfying assumptions (A1), (A2) and (A3), and let $\mu_n$ be the ESD of $V_n\defeq n^{-1}X_nX_n^T$. Then the moments of $\mu_n$ converge almost surely to the moments of $\mu^y$, that is
\begin{equation}\label{eq:mom}
\forall\,k\in\N:\ \integrala{\mu_n}{x^k} \xrightarrow[n\to\infty]{} \integrala{\mu^y}{x^k} \quad \as
\end{equation}
In particular, \eqref{eq:mom} implies $\mu_n \to \mu^y$ almost surely as $n\to\infty$.
\item The result in $i)$ is tight in the following sense: For any sequence $(a_n)_n$ which does not satisfy $a_n = o(n^{\epsilon})$ for all $\epsilon>0$, there exists an ensemble $(X_n)_n$ satisfying assumptions (A1), (A3), and also \eqref{eq:decay}, such that all but finitely many moments of a subsequence of $\mu_n$ diverge to infinity.
\end{enumerate}
\end{theorem}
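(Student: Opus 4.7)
The overall strategy for both parts is the method of moments. For part (i), I would establish expected-moment convergence $\E \integrala{\mu_n}{x^k} = \frac{1}{p}\E\tr V_n^k \to \integrala{\mu^y}{x^k}$ for each $k\in\N$, together with the variance bound $\V\!\left(\frac{1}{p}\tr V_n^k\right) = O(n^{-2})$. Borel--Cantelli then promotes these to almost sure convergence of every moment, and since the compactly supported Marchenko--Pastur law $\mu^y$ is moment-determined, $\mu_n \to \mu^y$ weakly almost surely follows.

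The starting point is the trace expansion
\[
\E\tr V_n^k \;=\; \frac{1}{n^k}\sum_{\mathbf{i}\in[p]^k}\sum_{\mathbf{j}\in[n]^k} \E\!\left[\prod_{s=1}^{k} X_n(i_s,j_s)\,X_n(i_{s+1},j_s)\right] \qquad (i_{k+1}\defeq i_1),
\]
to which I apply the Isserlis--Wick formula, writing each Gaussian expectation as a sum over the $(2k-1)!!$ perfect matchings of the $2k$ factors, each contributing a product of $k$ entries of $\Sigma_n$. The dominant contribution comes from the ``canonical'' non-crossing bipartite pair partitions familiar from the classical Marchenko--Pastur computation: they force the relevant indices to coincide, all involved covariances evaluate to the diagonal value $1$, and summing over the resulting index patterns and normalizing by $pn^k$ reproduces exactly the double-binomial sum \eqref{eq:MPmoments}. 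Every remaining term carries at least one covariance of modulus $\leq a_n/n$. The main obstacle is a careful bookkeeping of these off-diagonal contributions: for a matching/tuple pattern with $\ell \geq 1$ off-diagonal pairs, the extra combinatorial freedom in index choice must be shown to be more than offset by the factor $(a_n/n)^\ell$, yielding a total bound of the form $a_n^\ell/n^{s}$ with $s>0$ --- this is precisely where the sub-polynomial growth hypothesis $a_n = o(n^\epsilon)$ is invoked. The variance estimate is handled by an analogous Wick expansion on $4k$ factors: matchings that decouple the two copies of $\tr V_n^k$ cancel against $(\E\tr V_n^k)^2$, leaving only ``connecting'' matchings, each of which carries enough off-diagonal covariances to deliver the $O(n^{-2})$ decay.

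For part (ii), I would exhibit an explicit equicorrelated ensemble. Using the negation of the hypothesis, fix $\epsilon_0>0$, $c>0$ and a subsequence $(n_j)_j$ with $a_{n_j}\geq c\,n_j^{\epsilon_0}$, and for every $n$ define
\[
X_n(a,b) \;\defeq\; \sqrt{1 - a_n/n}\,Z_{a,b} \;+\; \sqrt{a_n/n}\,Y, \qquad (a,b)\in[p]\times[n],
\]
with independent standard Gaussians $(Z_{a,b})_{a,b}$ and $Y$. Then $X_n\sim \Ncal(0,\Sigma_n)$ with every off-diagonal entry of $\Sigma_n$ equal to $a_n/n$, so (A1), (A3) and \eqref{eq:decay} all hold. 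Decomposing $X_n = \sqrt{1-a_n/n}\,Z_n + \sqrt{a_n/n}\,Y\,\mathbf{1}_p\mathbf{1}_n^{T}$, the rank-one summand contributes $(a_n Y^2/n)\,\mathbf{1}_p\mathbf{1}_p^{T}$ to $V_n$, whose unique nonzero eigenvalue equals $a_n Y^2 p/n$, while the other two summands have operator norm of order $O(1)$ (by Bai--Yin applied to the i.i.d.\ matrix $Z_n$) and $O(\sqrt{a_n})$ (Cauchy--Schwarz on the rank-one cross term). Hence on an event of probability tending to one,
\[
\integrala{\mu_{n}}{x^k} \;\geq\; \frac{\lambda_{\max}(V_n)^k}{p} \;\gtrsim\; \frac{a_n^k\,Y^{2k}}{n},
\]
which along the subsequence $(n_j)$ is bounded below by $c^k Y^{2k}\,n_j^{k\epsilon_0-1}$. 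Since $Y\neq 0$ almost surely, this diverges for every integer $k > 1/\epsilon_0$, proving the divergence of all but finitely many moments of $\mu_{n_j}$.
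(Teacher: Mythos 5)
Your part (i) follows the same road as the paper --- method of moments, Wick/Isserlis expansion, and a bookkeeping of off-diagonal covariance factors against combinatorial index counts --- and the expectation step is sound in outline. However, your variance argument has a genuine gap. You claim that after the decoupled matchings cancel against $\left(\E\tr V_n^k\right)^2$, ``only connecting matchings remain, each of which carries enough off-diagonal covariances to deliver the $O(n^{-2})$ decay.'' This is false whenever the two index tuples $(\ubar{s},\ubar{t})$ and $(\ubar{s}',\ubar{t}')$ share edges: a connecting block may pair an occurrence of an edge $e$ in the first cycle with an occurrence of the \emph{same} edge $e$ in the second cycle, and the corresponding factor is then $\Sigma_n(e,e)=1$, contributing no decay at all. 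In the extreme case $(\ubar{s},\ubar{t})=(\ubar{s}',\ubar{t}')$ a double-edge cycle, the connecting matching that pairs each edge of the first copy with its twin in the second copy gives a product of covariances exactly equal to $1$. For such terms the required smallness must come not from the covariances but from a reduced count of index pairs: sharing $\ell$ edges forces the union of the two vertex sets to lose at least $\ell$ vertices relative to the disjoint case (this is the paper's Lemma~\ref{lem:doublebound}), and it is the resulting loss of a factor $(p\vee n)^{\ell}$ against the normalization $p^{-2}n^{-2k}$ that yields summable decay. Without this counting lemma, or an equivalent, your variance estimate does not go through.

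Part (ii) is correct and takes a genuinely different route. The paper stays inside the moment machinery: it picks the equicovariant $\Sigma_n$ with off-diagonal entries $cn^{\epsilon-1}$ along the subsequence and shows, using positivity of all Wick summands, that the all-singles profile alone contributes $\frac{(p)_k(n)_k}{pn^k}\cdot c^k n^{k\epsilon-k}\,\#\Pcal\Pcal(2k)\to\infty$ to the \emph{expected} $k$-th moment once $k\epsilon>1$. You instead realize the same ensemble explicitly as $\sqrt{1-a_n/n}\,Z_n+\sqrt{a_n/n}\,Y\,\mathbf{1}\mathbf{1}^T$ and lower-bound the random moment by $\lambda_{\max}(V_n)^k/p$ via the rank-one spike of size $\asymp a_nY^2p/n$; this is essentially the decomposition the paper itself deploys later for Theorem~\ref{thm:rankoneexample}. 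Your route buys a slightly stronger conclusion (almost sure divergence of the random moments along the subsequence, rather than divergence in expectation), at the price of invoking Bai--Yin for the bulk term. One small repair: replace $a_n$ by $a_n\wedge n$ so that $\sqrt{1-a_n/n}$ is defined; this still satisfies \eqref{eq:decay} and preserves the polynomial lower bound along the subsequence.
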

Part $ii)$ of Theorem \ref{thm:MPlaw} shows that our conditions are methodically tight, i.e., if the uniform correlation bound in (A2) is relaxed, then the method of moments cannot be used to infer weak convergence.

Our second goal of this paper is to investigate the behavior of $\opnorm{V_n}$, the operator norm of $V_n$. If the Marchenko-Pastur law holds, it is not unreasonable to expect $\opnorm{V_n}$ to converge almost surely to the right endpoint $y_+$ of the Marchenko-Pastur distribution. Part $i)$ in the following theorem shows that this is indeed the case for a large subclass of the models we investigated in Theorem~\ref{thm:MPlaw} $i)$. 
However, in part $iii)$ of the following theorem we see that convergence of $\opnorm{V_n}$ to $y_+$ need not take place, although Theorem~\ref{thm:MPlaw} $i)$ is applicable. Instead, the operator norm may remain stochastic in the limit.

\begin{theorem}
\label{thm:opnorm}
Let $(X_n)_n$ be an ensemble of jointly Gaussian observations satisfying assumptions (A1), (A3), and
\begin{equation}
\label{eq:deltacondition}
\forall\ (a,b)\neq(c,d)\in\oneto{p}\times\oneto{n}:\ \abs{\E X_n(a,b)X_n(c,d)} = \abs{\Sigma_n((a,b),(c,d))} \leq \frac{C}{n^{\delta}} \tag{A2'}
\end{equation}
for some fixed $C,\delta>0$. Then the following statements hold for $V_n = n^{-1}X_nX_n^T$.
\begin{enumerate}[i)]
\item If $\delta>1$, then $\opnorm{V_n}\to y_+=(1+\sqrt{y})^2$  almost surely as $\nto$. 
\item If $\delta\in(0,1)$, there exist ensembles satisfying the assumptions of the theorem such that $\opnorm{V_n}\to\infty$ almost surely as $\nto$.
\item If $\delta=1$, there exists an ensemble satisfying the assumptions of the theorem such that, as $\nto$, $\opnorm{V_n}$ converges almost surely to some non-degenerate random variable.
\end{enumerate}
\end{theorem}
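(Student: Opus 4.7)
The plan is to treat the three parts of Theorem~\ref{thm:opnorm} separately: part (i) by combining Theorem~\ref{thm:MPlaw}(i) (for the lower bound) with a moment-method argument at growing exponent (for the upper bound), while parts (ii) and (iii) both follow from a single explicit construction based on the equicorrelated Gaussian ensemble.

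For the lower bound in (i), observe that (A2') with $\delta>1$ implies (A2) with $a_n=Cn^{1-\delta}\to 0$, so Theorem~\ref{thm:MPlaw}(i) applies and $\mu_n\to\mu^y$ weakly almost surely; since $y_+$ lies in the support of $\mu^y$, one concludes $\liminf_n \opnorm{V_n}\geq y_+$ a.s. For the matching upper bound, I would apply Markov's inequality to $\tr V_n^{2k}$ with $k=k_n\to\infty$ growing as a small power of $\log n$; it then suffices to show $\E\tr V_n^{2k_n}\leq p\cdot y_+^{2k_n}\cdot n^{o(1)}$. Because $X_n$ is Gaussian, Wick's formula expands $\E\tr V_n^{2k}$ as a sum, indexed by closed walks of length $2k$ on $\oneto{p}\times\oneto{n}$ together with a pairing of the $4k$ Gaussian factors, of products of entries of $\Sigma_n$. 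Pairings that match only identical index pairs contribute the leading MP term $p\cdot\integrala{\mu^y}{x^{2k}}$, while every other pairing incurs at least one off-diagonal factor of size at most $C/n^\delta$; a combinatorial count (in the spirit of the classical Bai--Silverstein walk analysis) bounds the resulting error by $p\cdot y_+^{2k}\cdot k^{O(1)}\cdot n^{1-\delta}$. For $\delta>1$ and $k_n$ growing sufficiently slowly, the error is negligible, and a Borel--Cantelli argument upgrades the tail bound to almost-sure convergence.

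For parts (ii) and (iii), define the equicorrelated ensemble
\[
X_n(a,b) \;=\; \sqrt{1-c_n}\,Z_n(a,b)+\sqrt{c_n}\,G,
\]
where $c_n=C/n^\delta$, the entries of $Z_n$ are i.i.d.\ standard Gaussian, and $G\sim\mathcal{N}(0,1)$ is independent of $Z_n$. Each $X_n(a,b)$ has unit variance and all off-diagonal covariances equal $c_n$, so (A1), (A3) and (A2') hold. Expanding $V_n=n^{-1}X_nX_n^T$ yields
\[
V_n \;=\; \tfrac{1-c_n}{n}\,Z_nZ_n^T \;+\; \tfrac{\sqrt{c_n(1-c_n)}}{n}\,G\bigl(v_n\mathbf{1}_p^T+\mathbf{1}_pv_n^T\bigr) \;+\; c_n G^2\,\mathbf{1}_p\mathbf{1}_p^T,
\]
with $v_n=Z_n\mathbf{1}_n\in\R^p$. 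The last summand is rank one with dominant eigenvalue $c_nG^2p\asymp G^2 n^{1-\delta}$. For $\delta\in(0,1)$ this diverges, while the other two summands have operator norm bounded by $y_++o(1)$ and by $O(|G|n^{(1-\delta)/2})$ respectively, both of strictly smaller order; Weyl's inequality then forces $\opnorm{V_n}\to\infty$ a.s.\ on $\{G\neq 0\}$, proving (ii). For $\delta=1$, the rank-one perturbation converges to the genuinely random constant $CyG^2$, the cross term contributes at the same order, and the precise limiting distribution of $\opnorm{V_n}$ is obtained through a spiked-covariance BBP-type analysis; this is exactly the content of Theorem~\ref{thm:rankoneexample}, which I would invoke to conclude (iii).

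The main obstacle is the error control in the moment-method proof of (i): one must balance the growth of the moment index $k_n$, the combinatorial explosion of Wick pairings, and the smallness $n^{-\delta}$ of off-diagonal correlations, so that the correction dies faster than $y_+^{-2k_n}$ grows. The sharp threshold $\delta=1$ emerges as the break-even point of this balance. A secondary subtlety in (iii) is that the rank-two cross term has operator norm of the same order as the rank-one perturbation when $\delta=1$, so it cannot simply be discarded but must be carefully incorporated in the BBP-type calculation that identifies the limiting distribution.
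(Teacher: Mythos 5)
Your constructions for parts (ii) and (iii) coincide with the paper's: the equicovariant ensemble $X_n=\sqrt{1-c_n}\,Y_n+\sqrt{c_n}\,Z$, the decomposition of $V_n$ into Wishart, cross and rank-one pieces, and the reduction of the $\delta=1$ case to the BBP-type limit of Theorem~\ref{thm:rankoneexample}; this is legitimate, since the paper itself deduces (ii) and (iii) from that theorem. (One point of divergence: the paper argues that the cross term has vanishing operator norm for every $\delta>0$ and then analyzes only $A_n+D_n$, whereas you keep the cross term at order $n^{(1-\delta)/2}$ and propose to fold it into the BBP computation when $\delta=1$; since you ultimately invoke Theorem~\ref{thm:rankoneexample} for the limit law, nothing is lost on your side.) For part (i) your skeleton is also the paper's --- Geman's trace method at exponent $k_n\asymp\log n$ plus Borel--Cantelli, with the lower bound supplied by Theorem~\ref{thm:MPlaw} --- but the one step that carries the whole proof is asserted with a justification that would fail.

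Concretely, you claim that every non-leading Wick pairing ``incurs at least one off-diagonal factor of size at most $C/n^{\delta}$'' and that a combinatorial count then bounds the error by $p\,y_+^{2k}\,k^{O(1)}\,n^{1-\delta}$. A single factor of $n^{-\delta}$ is hopeless against the entropy of walks containing single edges: a closed walk of length $2k$ all of whose edges are distinct can be chosen in roughly $(pn)^k$ ways, which exceeds the normalization $n^{k}$ by a factor of order $p^{k}$. One must therefore extract decay $n^{-\delta s/2}$ from a walk with $s$ single edges (one factor per \emph{pair} of singles, the worst case a Wick pairing permits) and match it against the vertex bound $\ell\le k+s/2$ of Lemma~\ref{lem:pathcomb}~vii); the threshold $\delta=1$ is exactly the point where $n^{\ell-k}\,n^{-\delta s/2}\le n^{(1-\delta)s/2}$ stops decaying, so the ``balance'' you flag as the main obstacle is not a refinement of your bound but its entire content. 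Moreover, with $k\asymp\log n$ the Wick and path-counting prefactors are of size $k^{O(k)}=n^{O(\log\log n)}$, not $k^{O(1)}$, and must be absorbed by genuine exponential-in-$k$ savings; this is what forces the refined per-walk estimate $\alpha_{\ell,s}\le 4(2k)^{(6(k-\ell)+6+4s)\wedge 2k}\,n^{-\delta s/2}$ of Lemma~\ref{lem:alphabetabounds}~b) and the three-regime case analysis in $(s,\ell)$ of Subsection~\ref{sec:convergence}, which is the technical core of the paper and is absent from your sketch. Finally, $k_n$ must grow proportionally to $\log n$ with a sufficiently large constant (the paper takes $k_n=\lfloor w\log n\rfloor$ with $w\log(y_+/z)<-3$); a ``small power of $\log n$'' would leave $(y_+/z)^{k_n}$ non-summable and break the Borel--Cantelli step.
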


Statements $ii)$ and $iii)$ of Theorem~\ref{thm:opnorm} require the construction concrete examples. The leading example (a variant of which is also used for the proof of Theorem~\ref{thm:MPlaw} $ii)$) is a setup we call \emph{equicovariant ensemble}: An ensemble $(X_n)_n$ of centered jointly Gaussian observations will be called \emph{equicovariant with sequence $(a_n)_n$}, if for all $n\in\N$ and  all $(i,j)\neq (k,\ell)\in\oneto{p}\times\oneto{n}$ we have $\E X_n(i,j) X_n(k,\ell)=a_n$. Statements $ii)$ and $iii)$ of Theorem~\ref{thm:opnorm} follow from the next result.

\begin{theorem}
\label{thm:rankoneexample}
Let $V_n = n^{-1} X_nX_n^T$, where $X_n$ satisfies (A1), (A3) and is equicovariant with sequence $a_n=n^{-\delta}$ for some fixed $\delta>0$. Then as $\nto$, the following statements hold:
\begin{enumerate}[i)]
\item If $\delta > 1$, then $\opnorm{V_n}$ converges almost surely to $y_+=(1+\sqrt{y})^2$.
\item If $\delta \in (0,1)$, then 
$$n^{\delta-1} \opnorm{V_n}\ \xrightarrow[n\to\infty]{\text{d}}\ yZ^2$$
where $Z$ is a standard normal random variable. 
\item If $\delta =1$, then
\begin{equation*}
\opnorm{V_n}\ \xrightarrow[n\to\infty]{\text{d}}\ \left(1+\sqrt{y}\right)^2 \1_{\{Z^2\le 1+ \sqrt{y^{-1}}  \}} + Z^2 \left(y+\frac{1}{Z^2-1}\right) \1_{\{Z^2> 1+ \sqrt{y^{-1}}  \}}\,,
\end{equation*}
where $Z$ is a standard normal random variable.
\end{enumerate}
Further, the ensemble $X_n$ can be constructed so that the convergence in $ii)$ and $iii)$ holds almost surely.
\end{theorem}

\section{Proof of Theorem~\ref{thm:MPlaw}: Analysis of moments.}
\label{sec:moments}

In order to prove Theorem~\ref{thm:MPlaw}, we employ the method of moments. To apply this method, it is sufficient to carry out the following two steps. First, we show that
\begin{equation}
\label{eq:expectation}
\forall\, k\in\N:\ \E\integrala{\mu_n}{x^k}\ \xrightarrow[n\to\infty]{}\ \integrala{\mu^{y}}{x^k}.
\end{equation}
Second, if for all $k\in\N$, $\integrala{\mu_n}{x^k}=D^{(k,1)}_n +\ldots + D^{(k,\ell_k)}_n$ is a finite decomposition such that for all $i\in\oneto{\ell_k}$, $\E D^{(k,i)}_n$ converges to a constant (this decomposition will become clear when showing \eqref{eq:expectation}), then we show that
\begin{equation}
\label{eq:variance} 
\forall\, k\in\N:\ \forall\, i\in\oneto{\ell_k}:\ \V D^{(k,i)}_n \xrightarrow[n\to\infty]{} 0 \quad\text{summably fast.} 
\end{equation}
Indeed, by virtue of \eqref{eq:expectation} we obtain weak convergence of $\mu_n$ to $\mu^y$ in expectation, and \eqref{eq:variance} yields almost sure convergence of the random moments which entails almost sure weak convergence of $\mu_n$ to $\mu^y$. See e.g.\ \cite{Fleermann:Kirsch:2022c} for details. This section is organized as follows: In Subsection 3.1 we introduce combinatorial concepts needed for our proof. in Subsection 3.2 we derive convergence of expected moments \eqref{eq:expectation}, and in Subsection 3.3 we show that the variances of the decomposed random moments decay summably fast \eqref{eq:variance}.

\subsection{Combinatorial Preparations}

To show \eqref{eq:expectation} and \eqref{eq:variance}, we need the moments of $\mu_n$ and $\mu^{y}$. The moments of $\mu^{y}$ are given above in \eqref{eq:MPmoments}, whereas we may calculate the moments of $\mu_n$ by
\begin{align}
&\integrala{\mu_n}{x^k}\ =\ \integrala{\frac{1}{p}\sum_{s=1}^p\delta_{\lambda_s}}{x^k}\ = \frac{1}{p}\sum_{s\in\oneto{p}}\lambda_s^k
\notag\\
&=\frac{1}{p}\tr[V_n^k]\ =\ \frac{1}{p}\tr\left[\left(\frac{1}{n}X_nX_n^T\right)^k\right]=\frac{1}{pn^k}\sum_{s\in\oneto{p}} (X_nX_n^T)^k(s,s)\notag\\
&= \frac{1}{pn^k}\sum_{s_1,\ldots,s_k\in\oneto{p}}(X_n X_n^T)(s_1,s_2)(X_nX_n^T)(s_2,s_3)\cdots(X_nX_n^T)(s_k,s_1)\notag\\
&=\frac{1}{pn^k}\sum_{s_1,\ldots,s_k\in\oneto{p}}\sum_{t_1,\ldots,t_k\in\oneto{n}} X_n(s_1,t_1)X_n(s_2,t_1)X_n(s_2,t_2)X_n(s_3,t_2)\cdots X_n(s_k,t_k)X_n(s_1,t_k)\notag\\
&=\frac{1}{pn^k} \sum_{\ubar{s}\in\oneto{p}^k}\sum_{\ubar{t}\in\oneto{n}^k} X_n(\ubar{s},\ubar{t}),\label{eq:elaboratesum}
\end{align}
where for all $\ubar{s}\in\oneto{p}^k$ and $\ubar{t}\in\oneto{n}^k$ we define
\begin{equation}
\label{eq:STwalk}
X_n(\ubar{s},\ubar{t}) \defeq X_n(s_1,t_1)X_n(s_2,t_1)X_n(s_2,t_2)X_n(s_3,t_2)\cdots X_n(s_k,t_k)X_n(s_1,t_k).
\end{equation}

As we saw in \eqref{eq:elaboratesum}, the random moments $\integrala{\mu_n}{x^k}$ expand into elaborate sums. In order to be able to analyze these sums, we sort them with the language of graph theory. Each pair $(\ubar{s},\ubar{t})\in\oneto{p}^k\times\oneto{n}^k$ spans a Eulerian bipartite graph as in Figure~\ref{fig:Eulerian}.
\begin{figure}[htbp]
\centering
\includegraphics[clip, trim=10cm 12cm 10cm 4cm, width=\textwidth]{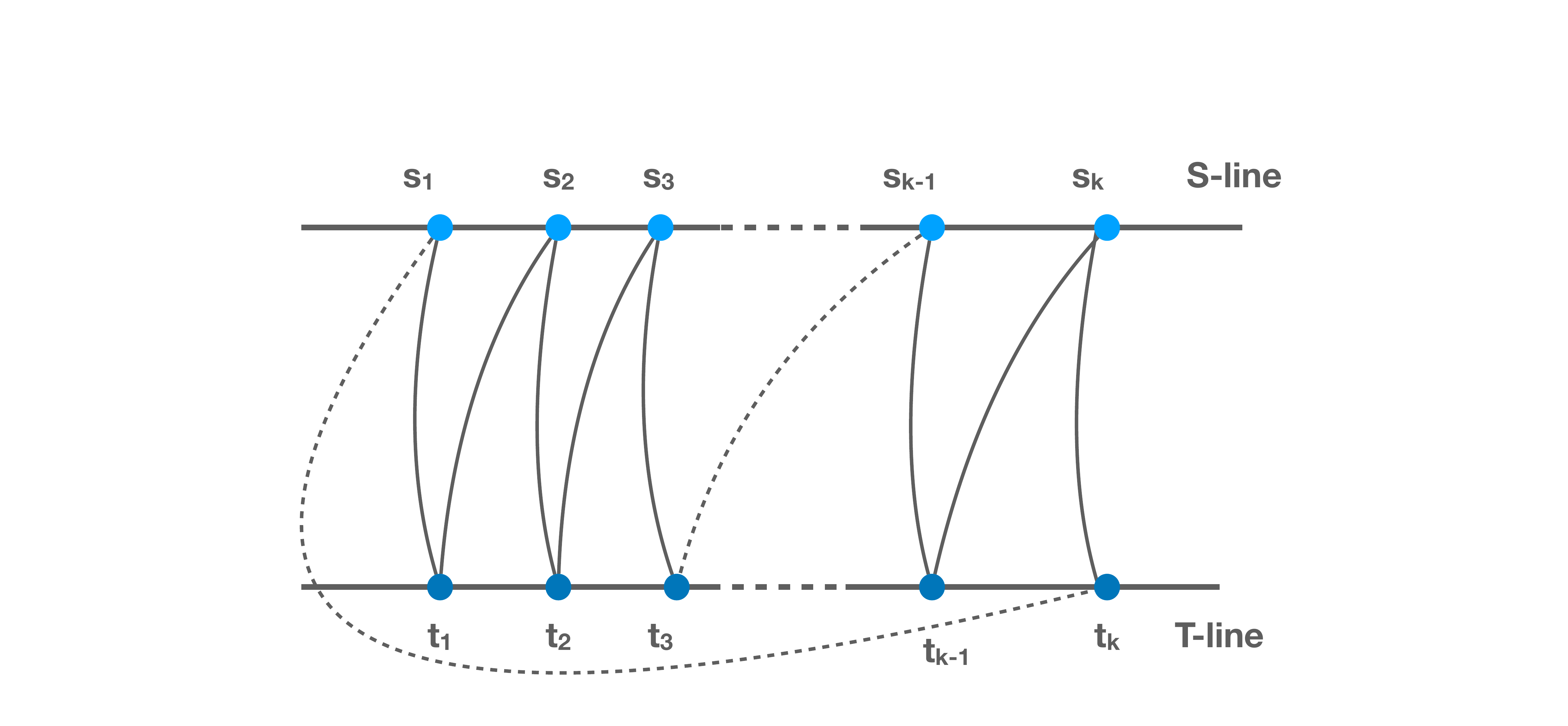}	
\caption{Eurlerian bipartite graph $\Gcal(\ubar{s},\ubar{t})$.}
\label{fig:Eulerian}
\end{figure}
Here, elements in the set $\{s_1,\ldots,s_k\}$ resp.\ $\{t_1,\ldots,t_k\}$ are called S-nodes resp.\ T-nodes. S- and T-nodes are considered different even if their value is the same and are thus placed on separate lines -- called S-line and T-line -- which are drawn horizontally beneath each other. Then we draw an undirected edge $\{s_i,t_j\}$ between $s_i$ and $t_j$, $i\in\oneto{p}$, $j\in\oneto{n}$, whenever $(s_i,t_j)$ or $(t_j,s_i)$ appears in \eqref{eq:STwalk}, where we allow for multi-edges. Here, if $(s_i,t_j)$ appears in \eqref{eq:STwalk}, we will call the edge $\{s_i,t_j\}$ \emph{down edge}, whereas if $(t_j,s_i)$ appears in \eqref{eq:STwalk}, we call $\{s_i,t_j\}$ \emph{up edge} (cf.\ Figure~\ref{fig:Eulerian}).
This yields the graph $\Gcal(\ubar{s},\ubar{t}) = (V(\ubar{s},\ubar{t}),E(\ubar{s},\ubar{t}),\phi_{\ubar{s},\ubar{t}})$, where
\begin{align*}
V(\ubar{s},\ubar{t})  &=\ \{s_1,\ldots,s_k\}\ \dot{\cup}\ \{t_1,\ldots,t_k\} \qquad \text{(disjoint union)}\\
E(\ubar{s},\ubar{t}) &= \{d_1,\ldots,d_k\}\ \dot{\cup}\ \{u_1,\ldots,u_k\}\qquad \text{(down edges, up edges)}	\\
&= \{e_1,e_2,\ldots, e_{2k}\} \qquad  (e_{2l-1} = d_l,\ e_{2l}=u_l,\ l=1,\ldots, k)\\
\phi_{\ubar{s},\ubar{t}}(d_i)&=\{s_i,t_i\}\\
\phi_{\ubar{s},\ubar{t}}(u_i)&=\{s_{i+1},t_i\}\,.
\end{align*}
Each $(\ubar{s},\ubar{t})$ also denotes a Eulerian cycle of length $2k$ through its graph $\Gcal(\ubar{s},\ubar{t})$ by 
\begin{equation}
\label{eq:Euleriancycle}	
s_1,d_1,t_1,u_1,s_2,d_2,t_2,\ldots,u_{k-1},s_k,d_k,t_k,u_k,s_1.
\end{equation}
Note that by construction, $\Gcal(\ubar{s},\ubar{t})$ contains no loops, but may contain multi-edges.
 The language of graph theory allows us to express $\integrala{\mu_n}{x^k}$ in a different way. Recall
\begin{equation}
\label{eq:naivemoment}	
\integrala{\mu_n}{x^k}\ =\ \frac{1}{pn^k} \sum_{\ubar{s}\in\oneto{p}^k}\sum_{\ubar{t}\in\oneto{n}^k} X_n(\ubar{s},\ubar{t})
\end{equation}
with
\begin{equation}
\label{eq:exmomentsummand}
 X_n(\ubar{s},\ubar{t}) = X_n(s_1,t_1)X_n(s_2,t_1)X_n(s_2,t_2)X_n(s_3,t_2)\cdots X_n(s_k,t_k)X_n(s_1,t_k).
\end{equation}
For any pair of tuples $(\ubar{s},\ubar{t})\in \oneto{p}^k\times \oneto{n}^k$, we define its profile
\[
\rho(\ubar{s},\ubar{t}) = (\rho_1(\ubar{s},\ubar{t}),\ldots,\rho_{2k}(\ubar{s},\ubar{t})),
\]
where for all $\ell\in[2k]$
\[
\rho_{\ell}(\ubar{s},\ubar{t}) = \#\{\phi_{\ubar{s},\ubar{t}}(e)\ |\ e\in E(\ubar{s},\ubar{t})\ \text{is an $\ell$-fold edge}\}.
\]
Here, an $\ell$-fold edge in $E(\ubar{s},\ubar{t})$ is any element $e\in E(\ubar{s},\ubar{t})$ for which there are exactly $\ell-1$ distinct other elements $e'_2,\ldots,e'_{\ell} \in E(\ubar{s},\ubar{t})$ so that $\phi_{\ubar{s},\ubar{t}}(e)=\phi_{\ubar{s},\ubar{t}}(e'_j)$ for $j\in\{2,\ldots,\ell\}$.

We now find that for all $\ell\in[2k]$, the Eulerian cycle $(\ubar{s},\ubar{t})$ traverses exactly $\rho_{\ell}(\ubar{s},\ubar{t})$ $\ell$-fold edges. As a result, the following trivial but useful equality holds:
\begin{equation}
\label{eq:alledgessum}
2k = \sum_{l=1}^{2k}\ell\cdot\rho_{\ell}(\ubar{s},\ubar{t}).
\end{equation}
Now for all $k\in\N$ we define the following set of profiles:
\[
\Pi(2k) = \left\{\rho \in\{0,\ldots,2k\}^{2k}\ |\ \rho \ \text{profile of some } (\ubar{s},\ubar{t})\in\oneto{p}^k\times\oneto{n}^k\right\}.
\]
Using this notation, we may write
\begin{equation}
\label{eq:graphmoment}	
\integrala{\mu_n}{x^k} =\sum_{\rho\in\Pi(2k)}\frac{1}{pn^k}\sum_{(\ubar{s},\ubar{t})\in\Tcal^{p,n}(\rho)}  X_n(\ubar{s},\ubar{t}),
\end{equation}
where
\[
\Tcal^{p,n}(\rho) \defeq \left\{(\ubar{s},\ubar{t})\in\oneto{p}^k\times\oneto{n}^k\ |\ \rho(\ubar{s},\ubar{t})=\rho \right\}.
\]
The transition from \eqref{eq:naivemoment} to \eqref{eq:graphmoment} allows us to keep track (in particular) of single and double edges since their contribution (or lack thereof) is a crucial point to analyze.

The next fundamental lemma will give an upper bound on the number of tuple pairs $(\ubar{s},\ubar{t})$ with at most $\ell\in\oneto{2k}$ vertices. Note that there are always at least two vertices present, since S-nodes and T-nodes are disjoint. Notationally, we set $V(\ubar{u})\defeq\{u_1,\ldots,u_k\}$ for any $\ubar{u}\in\N^k$ and if also $\ubar{v}\in\N^k$, we set $V(\ubar{u},\ubar{v})\defeq\{u_1,\ldots,u_k\}\,\dot{\cup} \,\{v_1,\ldots,v_k\}$, even if we do not regard $(\ubar{u},\ubar{v})$ as a graph. In particular, $\#V(\ubar{u},\ubar{v})= \#V(\ubar{u})+\#V(\ubar{v})$.

\begin{lemma}\label{lem:maxnodestuples}
Let $p,n,k\in\N$, $a,b\in\{1,\ldots,k\}$ and $\ell\in\{2,3,\ldots,2k\}$ be arbitrary. Then
\begin{align*}
i)&\quad 
\# \left\{(\ubar{s},\ubar{t})\in\oneto{p}^k\times\oneto{n}^k\, |\, \#V(\ubar{s})=a, \#V(\ubar{t})=b\right\} \,\leq\, (ab)^k\cdot p^a n^b \,\leq\, k^{2k}\cdot p^a n^b\\
ii)&\quad 
\# \left\{(\ubar{s},\ubar{t})\in\oneto{p}^k\times\oneto{n}^k\, |\, \#V(\ubar{s},\ubar{t})\leq \ell\right\} \,\leq\, \ell^{2k+2}\cdot (p\vee n)^{\ell} \,\leq\,  (2k)^{2k+2}\cdot (p\vee n)^{\ell}.
\end{align*}
\end{lemma}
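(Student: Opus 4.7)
The plan is to prove both parts by elementary combinatorial counting, with part (ii) reduced to part (i) by summation.

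For part (i), I would argue in two steps. Given the constraint $\#V(\ubar{s}) = a$, there are $\binom{p}{a} \leq p^a$ choices for the image set of $\ubar{s}$ in $\oneto{p}$, and once this image is fixed, each of the $k$ entries of $\ubar{s}$ must be chosen from this set of size $a$, giving at most $a^k$ placements. (This overcounts by including non-surjective maps, but an upper bound is all we need.) The identical argument applied to $\ubar{t}$ yields at most $b^k n^b$ choices. Multiplying produces the bound $(ab)^k\, p^a n^b$, and since $a, b \leq k$ we obtain $(ab)^k \leq k^{2k}$, giving the second inequality in (i).

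For part (ii), the key observation is that S-nodes and T-nodes are by convention placed in disjoint vertex classes (even if their labels coincide), so $\#V(\ubar{s},\ubar{t}) = \#V(\ubar{s}) + \#V(\ubar{t})$. Hence the event $\#V(\ubar{s},\ubar{t}) \leq \ell$ decomposes as a finite union over pairs $(a,b)$ with $a,b \geq 1$ and $a+b \leq \ell$. I would sum the estimate from part (i) over these pairs and apply the crude bounds $(ab)^k \leq \ell^{2k}$, $p^a n^b \leq (p \vee n)^{a+b} \leq (p \vee n)^{\ell}$, together with the fact that the number of admissible $(a,b)$-pairs is at most $\ell^2$. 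Combining these gives the required bound $\ell^{2k+2}(p \vee n)^{\ell}$, and $\ell \leq 2k$ yields the final inequality.

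There is essentially no obstacle here; the lemma is purely combinatorial and the bounds are intentionally crude (their use in later moment estimates only requires polynomial control). The one point worth isolating is the disjointness of the S- and T-node classes, which is precisely what makes the clean additive decomposition $\#V(\ubar{s},\ubar{t}) = \#V(\ubar{s}) + \#V(\ubar{t})$ valid and allows part (ii) to follow cleanly from part (i).
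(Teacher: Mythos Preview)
Your proposal is correct and follows essentially the same approach as the paper. The only cosmetic difference is in part (i): the paper first fixes the partition structure of $\ubar{s}$ via a surjection $f:\oneto{k}\to\oneto{a}$ (at most $a^k$ choices) and then assigns distinct values to the $a$ blocks (at most $p^a$ choices), whereas you first fix the image set ($\binom{p}{a}\leq p^a$ choices) and then assign each coordinate to an element of that set ($a^k$ choices); both lead to the same bound $(ab)^k p^a n^b$, and your derivation of part (ii) from part (i) via the decomposition over $(a,b)$ with $a+b\leq \ell$ matches the paper exactly.
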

\begin{proof}
For i) we check how many possibilities we have to construct such an ($\ubar{s},\ubar{t}$). First, we determine the coloring for $\ubar{s}$ by picking a surjective function $f:\{1,\ldots,k\}\to\{1,\ldots,a\}$, determining which places in $\ubar{s}$ should contain equal or different entries. This admits at most $a^k$ choices. Now we pick a value for $s_1$ and have $p$ possibilities. Then if $f(2)= f(1)$, we have no choice for $s_2$ since then $s_2$ must be equal to $s_1$. Otherwise, if $f(2)\neq f(1)$, we are left with at most $p$ choices for $s_2$. Proceeding this way for $\ell \in\{3,\ldots,k\}$, if $f(\ell)=f(i)$ for some $i<\ell$ then set $s_{\ell}\defeq s_i$, and otherwise we have at most $p-\#\{f(1),\ldots,f(\ell-1)\}$ choices for $s_{\ell}$, and these non-trivial choices happen $a-1$ times after the initial choice of $s_1$, thus admitting at most $p^a$ choices for $\ubar{s}$. Likewise, we pick a color structure $g:\{1,\ldots,k\}\to\{1,\ldots,b\}$ for $\ubar{t}$ for which we have at most $b^k$ choices and construct $\ubar{t}$ with at most $n^b$ choices. In total, we have at most
\[
 a^k\cdot p^a\cdot b^k\cdot n^b = (ab)^k\cdot p^a n^b
 \]
choices. This proves i), and for ii) we first decide on the number $a\leq k$ of different vertices in $\ubar{s}$ and the number $b\leq k$ of different vertices in $\ubar{t}$ such that $a+b\leq \ell$. This choice of $(a,b)$ admits at most $\ell^2$ choices. Then with i), the statement follows.
\end{proof}

\subsection{Convergence of expected moments}
In this subsection we establish \eqref{eq:expectation}, which will prove the MP law in expectation. To this end, we proceed to analyze
\begin{equation}
\label{eq:MPexpmom}
\E\integrala{\mu_n}{x^k} =\sum_{\rho\in\Pi(2k)}\frac{1}{pn^k}\sum_{(\ubar{s},\ubar{t})\in\Tcal^{p,n}(\rho)} \E X_n(\ubar{s},\ubar{t}).
\end{equation}
The way we analyze \eqref{eq:MPexpmom} is to establish upper bounds on $\E X_n(\ubar{s},\ubar{t})$ depending on its profile $\rho(\ubar{s},\ubar{t})$, and to bound $\#\Tcal^{p,n}(\rho)$ by a function of $p$, $n$ and $\rho$. For the latter, we formulate the next lemma. It is a modification of corresponding lemmas obtained in \cite{FleermannDiss} in the setting of random band matrices.

\begin{lemma}\label{lem:nodeandtuplecount}
Let $k \in\N$ be arbitrary. Then it holds $\#\Pi(2k)\leq 16^k$. Further, let $p,n\in\N$ and $\rho\in\Pi(2k)$ be arbitrary, then
\begin{enumerate}
\item[i)] For any $(\ubar{s},\ubar{t})\in\Tcal^{p,n}(\rho)$ we obtain
\[
\# V(\ubar{s},\ubar{t})\leq 1 + \rho_1 + \ldots + \rho_{2k} \,,
\]
so by Lemma~\ref{lem:maxnodestuples}:
\[
\#\Tcal^{p,n}(\rho)\leq (2k)^{2k+2} \cdot (p\vee n)^{1 + \rho_1 + \ldots + \rho_{2k}}.
\]
\item[ii)] If $\rho$ contains an odd edge, then for any $(\ubar{s},\ubar{t})\in\Tcal^{p,n}(\rho)$ we obtain
\[
\# V(\ubar{s},\ubar{t})\leq \rho_1 + \ldots + \rho_{2k}\,,
\]
so by Lemma~\ref{lem:maxnodestuples}:
\[
\#\Tcal^{p,n}(\rho)\leq (2k)^{2k+2} \cdot (p\vee n)^{\rho_1 + \ldots + \rho_{2k}}.
\]
\end{enumerate}
\end{lemma}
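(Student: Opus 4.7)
I will prove the three assertions in order, with the main work being the odd-edge refinement in part ii). The common organizing principle is that the \emph{underlying simple graph} of $\Gcal(\ubar{s},\ubar{t})$ (collapsing multi-edges) has precisely $\rho_1 + \ldots + \rho_{2k}$ edges and is connected, so classical graph-theoretic facts about connected graphs can be brought to bear.

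For the bound $\#\Pi(2k) \leq 16^k$, I use \eqref{eq:alledgessum} to identify each $\rho \in \Pi(2k)$ with a partition of $2k$, where $\rho_\ell$ counts the number of parts equal to $\ell$. Hence $\#\Pi(2k)$ is at most the partition function $p(2k)$, and the standard bound $p(n) \leq 2^{n-1}$ (compositions dominate partitions) gives $\#\Pi(2k) \leq 2^{2k-1} \leq 16^k$.

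For part i), the walk \eqref{eq:Euleriancycle} visits every node of $V(\ubar{s},\ubar{t})$, so $\Gcal(\ubar{s},\ubar{t})$ is connected. Since any connected simple graph on $v$ vertices has at least $v-1$ edges, setting $v = \#V(\ubar{s},\ubar{t})$ yields $v \leq 1 + \sum_{\ell=1}^{2k}\rho_\ell$. The tuple count follows directly from Lemma~\ref{lem:maxnodestuples} ii) applied with this value.

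For part ii), the main (and only non-routine) step is the following claim: \emph{if the underlying simple graph of $\Gcal(\ubar{s},\ubar{t})$ is a tree, then every edge in the multigraph has even multiplicity.} To prove it, I fix any edge $\{u,w\}$ of the tree and let $m$ be its multiplicity in the multigraph. Removing $\{u,w\}$ from the tree splits it into components $A \ni u$ and $B \ni w$, and the only multigraph-edges between $A$ and $B$ are the $m$ parallel copies of $\{u,w\}$. Because \eqref{eq:Euleriancycle} is a closed walk traversing each edge of the multigraph exactly once, the cut $(A,B)$ is crossed $m$ times in total; closedness of the walk forces this number to be even, so $m$ is even. Contrapositively, if $\rho_\ell > 0$ for some odd $\ell$, the underlying graph is connected but not a tree, hence contains at least $v$ distinct edges, i.e.\ $\sum_{\ell=1}^{2k}\rho_\ell \geq v$. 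The tuple bound then follows from Lemma~\ref{lem:maxnodestuples} ii) applied with this value.
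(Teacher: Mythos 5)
Your proposal is correct, and for the two non-routine parts it takes a genuinely different route from the paper. For the bound $\#\Pi(2k)\leq 16^k$, the paper simply bounds each coordinate by $\rho_\ell\leq \lfloor 2k/\ell\rfloor$ and multiplies, arriving at $\binom{4k}{2k}\leq 16^k$; your identification of a profile with a partition of $2k$ via \eqref{eq:alledgessum}, together with $p(2k)\leq 2^{2k-1}$, is sharper and equally valid. For part i) both arguments are essentially the same (a spanning closed walk shows connectedness, hence $v\leq 1+\#\text{edges}$). The real divergence is in part ii): the paper walks the Eulerian cycle starting at a carefully chosen vertex adjacent to an instance of the odd edge (using that among an odd number of traversals two consecutive ones must have the same orientation) so that no instance of that edge can discover a new vertex, yielding $v\leq \sum_\ell\rho_\ell$ directly. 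You instead prove the structural statement that if the underlying simple graph were a tree, every multiplicity would be even — via the standard parity fact that a closed walk crosses the edge cut determined by a tree edge an even number of times, and the Eulerian property makes that crossing count equal to the multiplicity — and then conclude that an odd edge forces the underlying graph to contain a cycle, so $\sum_\ell\rho_\ell\geq v$. Your argument is cleaner and more conceptual; the paper's tour-based bookkeeping has the advantage that it is reused almost verbatim in the proof of Lemma~\ref{lem:doublebound}, where two Eulerian cycles sharing common edges are concatenated and the same vertex-discovery count is performed along the joint tour, a setting where the tree/cut argument would need to be redone from scratch.
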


\begin{proof}
Each $\rho\in\Pi(2k)$ is a $2k$-tuple in which for all $\ell\in\{1,\ldots,2k\}$ the entry $\rho_{\ell}$ lies in the set $\{0,1,\ldots,\lfloor 2k/\ell\rfloor\}$, which follows directly from \eqref{eq:alledgessum}. Therefore, 
\[
\#\Pi(2k) \leq  \prod_{\ell=1}^{2k}\left(\frac{2k}{\ell}+1\right) = \frac{(4k)!}{(2k)!\cdot (2k)!} = \binom{2 (2k)}{2k}  \leq \frac{4^{2k}}{\sqrt{2k\pi}} \leq 16^k,
\]
where the fourth step is well-known fact about the central binomial coefficient. For the proof of statements $i)$ and $ii)$ it suffices to establish the upper bounds for $\#V(\ubar{s},\ubar{t})$, since the bounds on $\#\Tcal^{p,n}(\rho)$ then follow directly with Lemma~\ref{lem:maxnodestuples} $ii)$. Now to prove upper bounds for $\#V(\ubar{s},\ubar{t})$, the idea is to travel the Eulerian cycle generated by $(\ubar{s},\ubar{t}):$
\begin{equation}
\label{eq:walk}
s_1,e_1,t_1,e_2,s_2,e_3,t_2,\ldots,t_k, e_{2k}, s_1
\end{equation}
by picking an initial node $s_i$ or $t_i$ and then traversing the edges in increasing cyclic order until reaching the starting point again. On the way, we count the number of different nodes that were discovered. Whenever we pass an $\ell$-fold edge, only the first instance of that edge may discover a new vertex. \newline
\underline{$i)$}  We start our tour at  $s_1$ and observe this very vertex. Then, as we travel along the cycle, for each $\ell\in\{1,\ldots,2k\}$ we will pass $\ell\cdot \rho_{\ell}$ $\ell$-fold edges out of which only the first instance can discover a new node, and there are $\rho_{\ell}$ of these first instances. Considering the initial node, we arrive at $\# V(\ubar{s},\ubar{t})\leq 1 + \rho_1 + \ldots + \rho_{2k}$, which yields the desired inequality.\newline
\underline{$ii)$} In presence of an odd edge, we can start the tour at a specific vertex such that the odd edge cannot contribute to the newly discovered vertices. To this end, fix an $\ell$-fold edge in  $(\ubar{s},\ubar{t})$ with $\ell$ odd. Let $e_{i_1},\ldots,e_{i_{\ell}}$, $i_1 < \ldots < i_{\ell}$, be the instances of the $\ell$-fold edge in question in the cycle \eqref{eq:walk}. Since $\ell$ is odd, we must find a $j\in\{1,\ldots,\ell\}$ such that $e_{i_j}$ and $e_{i_{j+1}}$ are both up edges or both down edges (where $\ell+1\equiv 1$), since we are on a cycle. W.l.o.g.\ $e_{i_j}$ and $e_{i_{j+1}}$ are a down edges. We start our tour at the T-node after $e_{i_j}$. Since $e_{i_{j+1}}$ is a down-edge as well, its initial S-node must be discovered by an edge different from our $\ell$-fold edge.  In particular, none of the edges $e_{i_1},\ldots,e_{i_{\ell}}$ may discover a \emph{new} vertex. Therefore, the roundtrip leads to the discovery of at most $\rho_1 + \dots + (\rho_{\ell} - 1) + \ldots + \rho_{2k}$ new nodes in addition to the first node.
\end{proof}

Having established bounds on the quantities $\#\Tcal^{p,n}(\rho)$ in \eqref{eq:MPexpmom}, we now proceed to make the expressions $\E X_n(\ubar{s},\ubar{t})$ amenable for analysis. Note that by the generalized Hölder inequality, we can always apply the bound $\abs{\E X_n(\ubar{s},\ubar{t})}\leq L_{2k}\defeq (2k-1)!!$, which is the $2k$-th moment of a standard normal random variable. But we will have to bound $\E X_n(\ubar{s},\ubar{t})$ in more sophisticated ways. Recall that $X_n$ is a $p\times n$-matrix of jointly Gaussian entries with covariance matrix 
\[
\Sigma_{n}((a,b),(c,d)) \defeq \Cov(X_n(a,b),X_n(c,d)).
\]
for all $a,c\in\oneto{p}$, $b,d\in\oneto{n}$. In particular, $\Sigma_{n}$ is a $pn\times pn$ matrix, indexed by pairs.

To evaluate expectations of a product of correlated Gaussian random variables as in \eqref{eq:elaboratesum},  \eqref{eq:STwalk}, or \eqref{eq:MPexpmom}, we use the ``Theorem of Isserlis", also know as ``Wick's theorem", see e.g.~\cite{Speicher} or \cite{Isserlis}.
\begin{theorem}
\label{thm:wick}
Let $n\in\N$ and $\Sigma$ be a positive semidefinite, real symmetric $n\times n$ matrix. If $(Y_1,\ldots,Y_n)\sim \mathcal{N}(0,\Sigma)$, then for all $k\in\N$ and $i(1),\ldots,i(k)\in\oneto{n}$, it holds
\[
\E Y_{i(1)}\cdots Y_{i(k)} = \sum_{\pi\in\PePe{k}}\prod_{\{r,s\}\in\pi}\E Y_{i(r)}Y_{i(s)} = \sum_{\pi\in\PePe{k}}\prod_{\{r,s\}\in\pi} \Sigma \big(i(r),i(s)\big),
\]
where $\PePe{k}$\label{sym:pairpartitions} denotes the set of all pair partitions on $\{1,\ldots,k\}$. In particular, we obtain for $k$ odd that
\[
\E Y_{i(1)}\cdots Y_{i(k)} = 0.
\]
\end{theorem}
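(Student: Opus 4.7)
The plan is to prove the Isserlis/Wick formula by induction on $k$, with Gaussian integration by parts as the main analytic tool. Since $\PePe{k}$ is empty for $k=1$ and more generally for every odd $k$, the base case $k=1$ reads $\E Y_{i(1)} = 0$ (matching the empty sum), and the odd-$k$ assertion becomes automatic once the general formula is established. For $k=2$, the statement reduces to $\E Y_{i(1)} Y_{i(2)} = \Sigma(i(1), i(2))$, which is the definition paired with the unique pair partition $\{\{1,2\}\}$.

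The key ingredient is the Gaussian integration by parts identity: for any polynomial $f:\R^n\to\R$,
\[
\E[Y_j\, f(Y)] \;=\; \sum_{i=1}^n \Sigma(j,i)\, \E\!\left[\frac{\partial f}{\partial y_i}(Y)\right].
\]
If $\Sigma$ is invertible, this is direct integration by parts against the Gaussian density $p(y) \propto \exp(-\tfrac{1}{2} y^T \Sigma^{-1} y)$: the relation $\partial_i p(y) = -(\Sigma^{-1} y)_i\, p(y)$ yields $\E[\partial_i f(Y)] = \E[f(Y)(\Sigma^{-1} Y)_i]$, and contracting with $\Sigma(j,i)$ gives the claim. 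For general positive semidefinite $\Sigma$ one passes to the limit through $\Sigma + \epsilon I$ as $\epsilon \downarrow 0$, which is harmless because both sides of the Wick identity are polynomials in the entries of $\Sigma$.

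The inductive step applies integration by parts with $j = i(k)$ and $f(y) = y_{i(1)} \cdots y_{i(k-1)}$, giving
\[
\E \prod_{r=1}^k Y_{i(r)} \;=\; \sum_{s=1}^{k-1} \Sigma(i(k), i(s))\, \E \prod_{\substack{1 \leq r \leq k-1 \\ r \neq s}} Y_{i(r)}.
\]
By the inductive hypothesis, each inner expectation expands as a sum over pair partitions $\pi'$ of $\oneto{k-1} \setminus \{s\}$ of $\prod_{\{r,r'\}\in\pi'}\Sigma(i(r),i(r'))$. Adjoining the block $\{s, k\}$ to each such $\pi'$ produces exactly those pair partitions of $\oneto{k}$ in which $k$ is matched with $s$, and as $s$ ranges over $\oneto{k-1}$ one recovers every pair partition of $\oneto{k}$ exactly once (each pair partition of $\oneto{k}$ has a unique block containing $k$, whose other element determines $s$). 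Reassembling yields the Isserlis formula. When $k$ is odd, $k-2$ is also odd, so by induction each inner expectation vanishes and with it the whole sum, recovering the final assertion.

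The main obstacle is the rigorous justification of the Gaussian integration-by-parts identity in the possibly degenerate case; beyond this analytic caveat, the argument is purely combinatorial, hinging on the ``adjoin $\{s,k\}$'' bijection between indexed pair partitions of $\oneto{k-1}\setminus\{s\}$ and pair partitions of $\oneto{k}$. An equally standard alternative would expand the moment generating function $M(t) = \exp(\tfrac{1}{2} t^T \Sigma t)$ as a power series and read off the formula by matching coefficients of $t_{i(1)}\cdots t_{i(k)}$, but this swaps one analytic input for another without shortening the combinatorial core.
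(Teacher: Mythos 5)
Your proof is correct. Note, however, that the paper does not prove Theorem~\ref{thm:wick} at all: it is the classical Isserlis/Wick theorem and is simply quoted with references, so there is no in-paper argument to compare against. Your induction via Gaussian integration by parts is one of the two standard proofs (the other being the moment-generating-function expansion you mention), and all the essential steps are in place: the Stein-type identity $\E[Y_j f(Y)]=\sum_i\Sigma(j,i)\E[\partial_i f(Y)]$, the observation that $\partial_i\bigl(y_{i(1)}\cdots y_{i(k-1)}\bigr)=\sum_{s:i(s)=i}\prod_{r\neq s}y_{i(r)}$ which collapses the sum over $i$ to a sum over $s\in\oneto{k-1}$, and the bijection between pair partitions of $\oneto{k}$ and pairs $(s,\pi')$ with $\pi'$ a pair partition of $\oneto{k-1}\setminus\{s\}$. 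One small remark on the degenerate case: your justification that ``both sides of the Wick identity are polynomials in the entries of $\Sigma$'' is mildly circular for the left-hand side, since knowing that Gaussian moments are polynomials in the covariances is essentially Wick's formula itself. It is cleaner (and sufficient) to argue continuity: writing $Y_\epsilon=(\Sigma+\epsilon I)^{1/2}G$ and $Y=\Sigma^{1/2}G$ for a standard Gaussian $G$, one has $Y_\epsilon\to Y$ in every $L^p$, so all polynomial moments converge and the identity passes to the limit. With that cosmetic fix the argument is complete.
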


Using the formula of Isserlis, we can give a more detailed version of \eqref{eq:MPexpmom} as follows:
\begin{align}
\E\integrala{\mu_n}{x^k} 
&=\sum_{\rho\in\Pi(2k)}\frac{1}{pn^k}\sum_{(\ubar{s},\ubar{t})\in\Tcal^{p,n}(\rho)} \E X_n(\ubar{s},\ubar{t})\notag\\
&= \sum_{\rho\in\Pi(2k)}\frac{1}{pn^k}\sum_{(\ubar{s},\ubar{t})\in\Tcal^{p,n}(\rho)}\sum_{\pi\in\Pcal\Pcal(2k)}\prod_{\{u,v\}\in\pi} \Sigma_n\left(e_u(\ubar{s},\ubar{t}),e_v(\ubar{s},\ubar{t})\right),\label{eq:MPexpGaussianmom}
\end{align}
where for any $w\in\oneto{2k}$, $e_w(\ubar{s},\ubar{t})$  is the pair $(a,b)\in \oneto{p}\times\oneto{n}$ containing the nodes  connected by the $w$-th edge of $(\ubar{s},\ubar{t})$.  
Next, let us analyze for which $\rho\in\Pi(2k)$ we can expect an asymptotic contribution in \eqref{eq:MPexpGaussianmom}.\newline
\underline{Case 1: $\rho_1=0$ and $\rho_{\ell}>0$ for some $\ell\geq 3$.}\newline
We obtain
\[
1 + \rho_1 + \ldots + \rho_{2k} \leq  
\left\{
\begin{array}{c}
 1 + \frac{2k-6}{2} + 2\\
 1 + \frac{2k-4}{2} + 1
\end{array}
\right\} = k,
\]
where the upper case is valid in presence of an odd edge (in which we then find at least a second odd edge), and the lower case is valid if no odd edges are present. 
Therefore, $\#\Tcal^{p,n}(\rho) \leq (2k)^{2k+2}\cdot (p\vee n)^k$ by Lemma~\ref{lem:nodeandtuplecount}, and since each summand $\E X_n(\ubar{s},\ubar{t})$ is uniformly bounded by some constant $L_{2k}$, we find a contribution of order $O_k(n^{-1})$, where $O_k(\cdot)$ means $O(\cdot)$ with a constant which depends only on $k$.\newline
\underline{Case 2: $\rho_2=k$.} In this case, $\rho$ admits only double edges, and we denote this specific profile by $\rho^{(k)}$, so $\rho^{(k)}_2=k$ and $\rho^{(k)}_{\ell}=0$ for all $\ell\neq 2$. Then by Lemma~\ref{lem:nodeandtuplecount}, all $(\ubar{s},\ubar{t})\in\Tcal^{p,n}(\rho^{(k)})$ have at most $k+1$ nodes, so we may subdivide this set further. We define
\begin{align*}
\Tcal^{p,n}_{\leq k}(\rho^{(k)}) &\defeq \left\{(\ubar{s},\ubar{t})\in\Tcal^{p,n}(\rho^{(k)}):\ \# V(\ubar{s},\ubar{t}) \leq k \right\}\ ,\\
\Tcal^{p,n}_{k+1}(\rho^{(k)}) &\defeq \left\{(\ubar{s},\ubar{t})\in\Tcal^{p,n}(\rho^{(k)}):\ \# V(\ubar{s},\ubar{t}) = k+1 \right\}
\end{align*}
and note that by Lemma~\ref{lem:maxnodestuples}, $\#\Tcal^{p,n}_{\leq k}(\rho^{(k)}) \leq (2k)^{2k+2}(p\vee n)^{k}$, so that 
\[
\frac{1}{pn^k}\sum_{(\ubar{s},\ubar{t})\in\Tcal^{p,n}(\rho^{(k)})} \E X_n(\ubar{s},\ubar{t}) =\frac{1}{pn^k} \sum_{(\ubar{s},\ubar{t})\in\Tcal_{k+1}^{p,n}(\rho^{(k)})} \E X_n(\ubar{s},\ubar{t}) \ + \ O_k\left(\frac{1}{n}\right).
\]
Now we use the formula of Isserlis again to obtain
\begin{equation}
\label{eq:GaussianMPmomentsOne}	
 \frac{1}{pn^k}\sum_{(\ubar{s},\ubar{t})\in\Tcal_{k+1}^{p,n}(\rho^{(k)})} \E X_n(\ubar{s},\ubar{t}) = \frac{1}{pn^k} \sum_{(\ubar{s},\ubar{t})\in\Tcal_{k+1}^{p,n}(\rho^{(k)})} \sum_{\pi\in\Pcal\Pcal(2k)}\prod_{\{u,v\}\in\pi} \Sigma_n\left(e_u(\ubar{s},\ubar{t}),e_v(\ubar{s},\ubar{t})\right).
\end{equation}
We observe in \eqref{eq:GaussianMPmomentsOne} that for each $(\ubar{s},\ubar{t})\in\Tcal_{k+1}^{p,n}(\rho^{(k)})$, there is exactly one pair partition $\pi^*=\pi^*(\ubar{s},\ubar{t})\in\Pcal\Pcal(2k)$ which pairs all the double edges in $(\ubar{s},\ubar{t})$, the subsequent product then assuming the value $1$. For all finitely many other $\pi\neq \pi^*$, we find at least two blocks in $\pi$ which pair two different edges, thus leading to at least two factors of off-diagonal entries in ${\Sigma_n}$, thus at least to a decay of order $O_k(a_n^2/n^2)$. So for all $(\ubar{s},\ubar{t})\in\Tcal_{k+1}^{p,n}(\rho^{(k)})$,
\[
\bigabs{\sum_{\pi\in\Pcal\Pcal(2k)}\prod_{\{u,v\}\in\pi} \Sigma_n\left(e_u(\ubar{s},\ubar{t}),e_v(\ubar{s},\ubar{t})\right) \ - \  1} \leq \#\Pcal\Pcal(2k) \cdot\frac{a_n^2}{n^2}.
\] 
Therefore, \eqref{eq:GaussianMPmomentsOne} becomes
\begin{equation}
\label{eq:GaussianMPmomentsTwo}	
\frac{1}{pn^k}\sum_{(\ubar{s},\ubar{t})\in\Tcal_{k+1}^{p,n}(\rho^{(k)})} \E X_n(\ubar{s},\ubar{t})  =  \frac{1}{pn^k}\# \Tcal_{k+1}^{p,n}(\rho^{(k)}) + \frac{1}{pn^k}O_{\star}\left(\#\Tcal_{k+1}^{p,n}(\rho^{(k)})\cdot \#\Pcal\Pcal(2k)\cdot \frac{a_n^2}{n^2} \right),
\end{equation}
where $O_{\star}(\cdot)$ means $O(\cdot)$ with constant $1$.

Since by Lemma~\ref{lem:nodeandtuplecount},
\[
\#\Tcal_{k+1}^{p,n}(\rho^{(k)}) \leq (2k)^{2k+2}(p\vee n)^{1+k},
\]
we obtain
\begin{equation}
\label{eq:GaussianMPmomentsThree}	
\frac{1}{pn^k}\sum_{(\ubar{s},\ubar{t})\in\Tcal_{k+1}^{p,n}(\rho^{(k)})} \E X_n(\ubar{s},\ubar{t})  = \frac{1}{pn^k}\cdot \# \Tcal_{k+1}^{p,n}(\rho^{(k)}) +   O_k\left(\frac{a_n^2}{n^2}\right).
\end{equation}
It is well-known that (e.g.\ \cite{Fleermann:Kirsch:2022c} or from the analysis in \cite{BaiSi})
\[
\frac{1}{pn^k}\cdot \# \Tcal_{k+1}^{p,n}(\rho^{(k)}) = \frac{1}{pn^k}\sum_{r=0}^{k-1}\frac{(p)_{r+1}(n)_{k-r}}{r+1}\binom{k}{r}\binom{k-1}{r}\ \xrightarrow[n\to\infty]{} \sum_{r=0}^{k-1} \frac{y^r}{r+1}\binom{k}{r}\binom{k-1}{r},
\]
which is the $k$-th moment of the MP distribution $\mu^y$, cf.\ \eqref{eq:MPmoments}. Here, for any integers $0\leq k\leq \ell$, $(\ell)_k \defeq \ell\cdot(\ell-1)\cdots (\ell-k+1)$, where an empty product equals $1$.

We have now completely established the contribution in \eqref{eq:MPexpGaussianmom} stemming from those $\rho\in\Pi(2k)$ with $\rho_1=0$. Their contribution is either vanishing or -- in the case $\rho_2=k$ -- yielding the moments of the Marchenko-Pastur distribution. Therefore, we will have shown \eqref{eq:expectation} if we can show that those $\rho\in\Pi(2k)$ with $\rho_1>0$ have a vanishing contribution. \newline
\underline{Case 3: $\rho_1>0$.}\newline
Since $\rho_1>0$, we obtain by Lemma~\ref{lem:nodeandtuplecount} that
\[
\#\Tcal^{p,n}(\rho)\leq (2k)^{2k+2} \cdot (p\vee n)^{\rho_1 + \ldots + \rho_{2k}} \leq (2k)^{2k+2} (n\vee p)^{\frac{\rho_1}{2} + k}.
\]
On the other hand, for every $(\ubar{s},\ubar{t})\in\Tcal^{p,n}(\rho)$, each pair partition $\pi\in\Pcal\Pcal(2k)$ has at least $\lceil\rho_1/2\rceil$ blocks leading to off-diagonal entries of $\Sigma_n$, thus yielding a decay of $(a_n/n)^{\rho_1/2}$ or faster in the product in \eqref{eq:MPexpGaussianmom}. As a result,
\begin{multline*}
\frac{1}{pn^k}\sum_{(\ubar{s},\ubar{t})\in\Tcal^{p,n}(\rho)}\sum_{\pi\in\Pcal\Pcal(2k)}\prod_{\{u,v\}\in\pi} \Sigma_n\left(e_u(\ubar{s},\ubar{t}),e_v(\ubar{s},\ubar{t})\right)\\
 \leq \frac{(2k)^{2k+2}(n\vee p)^{\frac{\rho_1}{2} + k}}{pn^k}\cdot \#\Pcal\Pcal(2k)\cdot\left(\frac{a_n}{n}\right)^{\frac{\rho_1}{2}} = O_k\left(\frac{a_n^{\frac{\rho_1}{2}}}{n}\right).
\end{multline*}
Therefore, we see that under the condition that
$\abs{\Sigma_n\left((a,b),(c,d)\right)} \leq a_n/n$
for all $(a,b)\neq(c,d)$, all $\rho$ with $\rho_1\geq 1$ will not contribute to the expected moments asymptotically. This proves the MP law in expectation, that is, \eqref{eq:expectation}, under assumptions (A1), (A2) and (A3).

\subsection{Divergence of expected moments}
In this subsection we prove the second statement of Theorem~\ref{thm:MPlaw}. We will show that if in the condition
\begin{equation}
\label{eq:counterexample}	
\forall\, n\in\N:\, \forall\,(a,b)\neq(c,d)\in\oneto{p}\times\oneto{n}:\quad \bigabs{\Sigma_n\left((a,b),(c,d)\right)} \leq \frac{a_n}{n},
\end{equation}
we do not require that $a_n=o(n^{\epsilon})$ for all $\epsilon>0$, that then there is an ensemble $Y_n$ of correlated Gaussian data matrices the entries of which have covariance matrix $\Sigma_n$ satisfying this condition, but where all but finitely many moments diverge to infinity.
To this end, let $a_n\in\R_+$ and $\epsilon>0$ such that $a_n\neq o(n^{\epsilon})$. Then there is a $c>0$ and a subsequence $J\subseteq\N$ such that $a_n\geq c n^{\epsilon}$ for all $n\in J$. W.l.o.g.\ we may assume that $\epsilon <1$ and $c < 1$. Then
define
\[
\forall\, n\in\N:\, \forall\,(a,b), (c,d)\in\oneto{p}\times\oneto{n}:\quad \Sigma_n\left((a,b),(c,d)\right) =
\begin{cases}
1 &\text{if } (a,b)=(c,d),\\
\frac{cn^{\epsilon}}{n} &\text{if } (a,b)\neq(c,d) \text{ and } n\in J,\\
0 &\text{if } (a,b)\neq(c,d)\text{ and }n\notin J.
\end{cases}
\] 
Then surely, the sequence $\Sigma_n$ is a sequence of positive definite covariance matrices, and it satisfies \eqref{eq:counterexample}.
Inspecting the sum \eqref{eq:MPexpGaussianmom}, we obtain with the same arguments as above, that the profiles $\rho\in\Pi(2k)$ with $\rho_1=0$ will yield the MP moments asymptotically (in the case $\rho_2=k$) or have a vanishing contribution (in the case $\rho_{\ell}>0$ for some $\ell\geq 3$). We now observe that all summands in \eqref{eq:MPexpGaussianmom} are positive, so it suffices to identify a profile $\rho\in\Pi(2k)$ with $\rho_1\geq 1$ and a contribution that diverges to infinity as $n\to\infty$ for all $k\in\N$ large enough. To this end, let $\rho^*\in\Pi(2k)$ be the profile with $\rho^*_1=2k$ and $\Tcal^{p,n}_{*}(\rho^*)\subseteq \Tcal^{p,n}(\rho^*)$ be the subset of all $(\ubar{s},\ubar{t})\in\Tcal^{p,n}(\rho^*)$ with $\#V(\ubar{s},\ubar{t})=2k$. Then 
\[
\#\Tcal^{p,n}_{*}(\rho^*) = (p)_k\cdot (n)_k \sim p^k n^k.
\]
On the other hand, for all $n\in J$, $(\ubar{s},\ubar{t})\in\Tcal^{p,n}_{*}(\rho^*)$, and $\pi\in\Pcal\Pcal(2k)$, 
\[
\prod_{\{u,v\}\in\pi} \Sigma_n\left(e_u(\ubar{s},\ubar{t}),e_v(\ubar{s},\ubar{t})\right) = \frac{c^k n^{k\epsilon}}{n^k}.
\]
Therefore, for $n\in J$,
\[
\frac{1}{pn^k}\sum_{(\ubar{s},\ubar{t})\in\Tcal^{p,n}_{*}(\rho)}\sum_{\pi\in\Pcal\Pcal(2k)}\prod_{\{u,v\}\in\pi} \Sigma_n\left(e_u(\ubar{s},\ubar{t}),e_v(\ubar{s},\ubar{t})\right) = \frac{(p)_k(n)_k}{pn^k}\cdot\frac{c^kn^{k\epsilon}\#\Pcal\Pcal(2k)}{n^k}\longrightarrow\infty 
\]
as soon as $k$ is chosen so large that $k\epsilon>1$. More precisely, for all $k>1/\epsilon$, $\E\integrala{\mu_n}{x^k}\to\infty$ for $n\in J$.

\subsection{Decay of the variances}
When analyzing the expectation of the random moments, we used the finite decomposition of the random moment as in \eqref{eq:graphmoment} and showed that for each $\rho\in\Pi(2k)$, the \emph{$\rho$-part} of the random moment
\begin{equation}
\label{eq:rhopart}
\frac{1}{pn^k}\sum_{(\ubar{s},\ubar{t})\in\Tcal^{p,n}(\rho)}  X_n(\ubar{s},\ubar{t})	
\end{equation}
converges in expectation to a constant; it converges to the $k$-th moment of the Marchenko-Pastur distribution if $\rho=\rho^{(k)}$, and to zero if $\rho\neq\rho^{(k)}$. To establish that moments converge almost surely, it thus suffices to show that the variance of each $\rho$-part \eqref{eq:rhopart} converges summably fast to zero. This variance is given by
\begin{equation}
\label{eq:varrhopart}
\frac{1}{p^2n^{2k}}\sum_{(\ubar{s},\ubar{t}),(\ubar{s}',\ubar{t}')\in\Tcal^{p,n}(\rho)} \left[\E X_n(\ubar{s},\ubar{t})X_n(\ubar{s}',\ubar{t}')- \E X_n(\ubar{s},\ubar{t})\E X_n(\ubar{s}',\ubar{t}')\right]\,.
\end{equation}
Now by Isserlis' formula,
\begin{equation}
\label{eq:exprod}	
\E X_n(\ubar{s},\ubar{t})X_n(\ubar{s}',\ubar{t}') = \sum_{\vartheta\in\Pcal\Pcal(4k)}\prod_{\{a,b\}\in\vartheta}\Sigma_n(e_a(\ubar{s},\ubar{t},\ubar{s}',\ubar{t}'),e_b(\ubar{s},\ubar{t},\ubar{s}',\ubar{t}')),
\end{equation}
where for any $w\in\oneto{4k}$, $e_w(\ubar{s},\ubar{t},\ubar{s}',\ubar{t}')$  is the pair $(a,b)\in \oneto{p}\times\oneto{n}$ containing the nodes connected by the $w$-th edge of $(\ubar{s},\ubar{t})$ if $w\in\oneto{2k}$ or by the $(w-2k)$-th edge of $(\ubar{s}',\ubar{t}')$ if $w\in\oneto{4k}\backslash\oneto{2k}$. Further, we obtain
\begin{equation}
\label{eq:prodex}
\E X_n(\ubar{s},\ubar{t})\E X_n(\ubar{s}',\ubar{t}') = 	\sum_{\substack{\pi\in\Pcal\Pcal(2k)\\ \pi'\in\Pcal\Pcal(2k)}} \prod_{\substack{\{u,v\}\in\pi\\ \{u',v'\}\in\pi'}}\Sigma_n(e_u(\ubar{s},\ubar{t}),e_v(\ubar{s},\ubar{t}))\Sigma_n(e_{u'}(\ubar{s}',\ubar{t}'),e_{v'}(\ubar{s}',\ubar{t}')).
\end{equation}
We will call a partition $\vartheta\in\Pcal\Pcal(4k)$ \emph{contained}, if any block $B\in\vartheta$ is either a subset of $\oneto{2k}$ or of $\oneto{4k}\backslash\oneto{2k}$. 
The key idea now is to associate any combination $(\pi,\pi')\in\Pcal\Pcal(2k)\times \Pcal\Pcal(2k)$ to the corresponding unique contained partition $\vartheta \in \Pcal\Pcal(4k)$ by defining the function $f:\oneto{2k}\to \oneto{4k}\backslash\oneto{2k}$, $f(\ell) = 2k + \ell$, and then 
\[
\vartheta \defeq \pi\cup \left\{ f(B)\ | \ B\in\pi'\right\}.
\]
Denote by $\Ncal\Pcal\Pcal(4k)$ the set of \emph{non-contained} pair partitions of $\oneto{4k}$. That is, each $\vartheta\in\Ncal\Pcal\Pcal(4k)$ has at least one block (and thus at least two blocks) $B=\{i,j\}$ where $i\in\oneto{2k}$ and $j\in\oneto{4k}\backslash\oneto{2k}$. Blocks with this property will be called \emph{traversing} in what follows. Note that each $\vartheta\in\Ncal\Pcal\Pcal(4k)$ has at least two traversing blocks.
We observe
\begin{multline}
\label{eq:difference}
\E X_n(\ubar{s},\ubar{t})X_n(\ubar{s}',\ubar{t}') - \E X_n(\ubar{s},\ubar{t})\E X_n(\ubar{s}',\ubar{t}') \\
= \sum_{\vartheta\in\Ncal\Pcal\Pcal(4k)}\prod_{\{a,b\}\in\vartheta}\Sigma_n(e_a(\ubar{s},\ubar{t},\ubar{s}',\ubar{t}'),e_b(\ubar{s},\ubar{t},\ubar{s}',\ubar{t}')).
\end{multline}
Thus, \eqref{eq:varrhopart} becomes
\begin{equation}
\label{eq:varanalysis}
\frac{1}{p^2n^{2k}}\sum_{(\ubar{s},\ubar{t}),(\ubar{s}',\ubar{t}')\in\Tcal^{p,n}(\rho)} \sum_{\vartheta\in\Ncal\Pcal\Pcal(4k)}\prod_{\{a,b\}\in\vartheta}\Sigma_n(e_a(\ubar{s},\ubar{t},\ubar{s}',\ubar{t}'),e_b(\ubar{s},\ubar{t},\ubar{s}',\ubar{t}')).		
\end{equation}
It is our goal to show that \eqref{eq:varanalysis} converges to zero summably fast. To this end, we define
\[
\Tcal_{d}^{p,n}(\rho)\defeq \left\{((\ubar{s},\ubar{t}),(\ubar{s}',\ubar{t}')) \in (\Tcal^{p,n}(\rho))^2\,|\,\text{$(\ubar{s},\ubar{t})$ and $(\ubar{s}',\ubar{t}')$ are edge-disjoint}\right\}
\]
and for all $\ell\in\oneto{2k}$,
\begin{multline*}
\Tcal_{c(\ell)}^{p,n}(\rho)\defeq \left\{((\ubar{s},\ubar{t}),(\ubar{s}',\ubar{t}'))\in(\Tcal^{p,n}(\rho))^2\,|\right.\\ \left.\text{$(\ubar{s},\ubar{t})$ and $(\ubar{s}',\ubar{t}')$ have exactly $\ell$ edges in common}\right\}.
\end{multline*}
In \eqref{eq:varanalysis} we will now consider the partial sums over $\Tcal_{d}^{p,n}(\rho,\rho')$ and then over $\Tcal_{c(\ell)}^{p,n}(\rho,\rho')$ for all $\ell = 1,\ldots, 2k$.\newline
\underline{Case 1: The edge-disjoint case.}\newline
\underline{Subcase 1: $\rho$ has only even edges.}\newline
Then by Lemma~\ref{lem:nodeandtuplecount},
\[
\#\Tcal_{d}^{p,n}(\rho) \leq \#\Tcal^{p,n}(\rho) \cdot \#\Tcal^{p,n}(\rho) \leq (2k)^{4k+4}(p\vee n)^{2k+2}.
\]
Since every non-contained $\vartheta$ will lead to at least 2 factors of decay in the product in \eqref{eq:varanalysis}, we obtain that the partial sum in \eqref{eq:varanalysis} over $\Tcal_{d}^{p,n}(\rho)$ is bounded by: 
\[
\frac{1}{p^2n^{2k}}\cdot (2k)^{4k+4}(p\vee n)^{2k+2} \cdot \#\Ncal\Pcal\Pcal(4k)\cdot \frac{a_n^2}{n^2} \xrightarrow[n\to\infty]{} 0 \quad \text{summably fast.}
\]
\underline{Subcase 2: $\rho$ has an odd edge.}\newline
Then by Lemma~\ref{lem:nodeandtuplecount},
\[
\#\Tcal_{d}^{p,n}(\rho) \leq \#\Tcal^{p,n}(\rho) \cdot \#\Tcal^{p,n}(\rho) \leq (2k)^{4k+4}(p\vee n)^{2k + \rho_1}.
\]
Also, we obtain at least $\rho_1\vee 2$ factors of decay in the product in \eqref{eq:varanalysis}, since in the worst case, $\vartheta$ groups two single edges of $(\ubar{s},\ubar{t})$ each with a single edge of $(\ubar{s}',\ubar{t}')$ and all remaining single edges into pairs. Thus,  the partial sum in \eqref{eq:varanalysis} over $\Tcal_{d}^{p,n}(\rho)$ is bounded by:
\[
\frac{1}{p^2n^{2k}}\cdot (2k)^{4k+4}(p\vee n)^{2k + \rho_1} \cdot\#\Ncal\Pcal\Pcal(4k)\cdot \frac{a_n^2}{n^{\rho_1\vee 2}} \xrightarrow[n\to\infty]{} 0 \quad \text{summably fast.}
\]
\underline{Case 2: The common-edge case.}\newline
In the edge-disjoint case, the fact that we considered only contained partitions $\vartheta\in\Ncal\Pcal\Pcal(4k)$ helped in that the (at least) two traversing blocks led to two factors of decay in the product of covariances in \eqref{eq:varanalysis}. In the common-edge case, this advantage is diminished since it is possible the traversing blocks in $\vartheta$ group common (i.e.\ same) edges in $(\ubar{s},\ubar{t})$ and $(\ubar{s}',\ubar{t}')$, leading to factors of $1$ in the product of covariances. To make up for this diminished decay, we need good bounds on the quantities $\#\Tcal_{c(\ell)}^{p,n}(\rho,\rho')$, which is the content of the following lemma:

\begin{lemma}\label{lem:doublebound}
Let $\rho\in\Pi(2k)$ and $\ell\in\oneto{2k}$, then the following statements hold:
\begin{enumerate}[i)]
\item For all $(\ubar{s},\ubar{t}),(\ubar{s}',\ubar{t}')\in\Tcal^{p,n}(\rho)$ with at least $\ell$ common edges, it holds
\[
\#(V(\ubar{s},\ubar{t})\cup V(\ubar{s}',\ubar{t}')) \leq 1 + 2\sum_{i=1}^{2k} \rho_i - \ell,
\]
so by Lemma~\ref{lem:maxnodestuples}:
\[
\#\Tcal_{c(\ell)}^{p,n}(\rho) \leq (2k)^{4k+2} (n\vee p)^{1 + 2\sum_{i=1}^{2k} \rho_i - \ell}.
\]
\item If there is an $\ell\in\oneto{2k}$ odd with $\rho_{\ell}\geq 1$, then for all $(\ubar{s},\ubar{t}),(\ubar{s}',\ubar{t}')\in\Tcal^{p,n}(\rho)$ with at least $\ell$ common edges, it holds
\[
\#(V(\ubar{s},\ubar{t})\cup V(\ubar{s}',\ubar{t}')) \leq 2\sum_{i=1}^{2k} \rho_i  - \ell,
\]
so by Lemma~\ref{lem:maxnodestuples}:
\[
\#\Tcal_{c(\ell)}^{p,n}(\rho) \leq (2k)^{4k+2} (n\vee p)^{2\sum_{i=1}^{2k} \rho_i - \ell}.
\]
\end{enumerate}
\end{lemma}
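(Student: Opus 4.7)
The plan is to extend the single-Eulerian-cycle vertex count of Lemma~\ref{lem:nodeandtuplecount} to a coordinated tour of both cycles $(\ubar{s},\ubar{t})$ and $(\ubar{s}',\ubar{t}')$. Each profile-$\rho$ cycle has exactly $\sum_{i=1}^{2k}\rho_i$ distinct undirected edges (one per first appearance of an $i$-fold edge), so the two underlying simple graphs together have $2\sum_i\rho_i-\ell$ distinct edges when they share exactly $\ell$. Since $\ell\geq 1$ forces $V(\ubar{s},\ubar{t})\cap V(\ubar{s}',\ubar{t}')$ to contain at least two vertices, the second tour can be launched at a vertex already discovered during the first.

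For assertion $i)$, I would first traverse the Eulerian cycle of $(\ubar{s},\ubar{t})$ starting at $s_1$ as in the proof of Lemma~\ref{lem:nodeandtuplecount}\,$i)$; this discovers at most $1+\sum_i\rho_i$ vertices. Next, I would traverse the Eulerian cycle of $(\ubar{s}',\ubar{t}')$ starting at some $v^\star\in V(\ubar{s},\ubar{t})\cap V(\ubar{s}',\ubar{t}')$, which adds no vertex at the start. Of the $\sum_i\rho_i$ distinct edges of this second cycle, the $\ell$ that coincide with edges of $(\ubar{s},\ubar{t})$ have both endpoints already in $V(\ubar{s},\ubar{t})$ and contribute nothing, while each of the remaining $\sum_i\rho_i-\ell$ distinct edges contributes at most one new vertex on first traversal. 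Summing yields $\#(V(\ubar{s},\ubar{t})\cup V(\ubar{s}',\ubar{t}'))\leq 1+2\sum_i\rho_i-\ell$, and the stated bound on $\#\Tcal_{c(\ell)}^{p,n}(\rho)$ then follows by applying Lemma~\ref{lem:maxnodestuples}\,$ii)$ to the concatenated length-$2k$ tuples $(\ubar{s}\ubar{s}',\ubar{t}\ubar{t}')$.

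For assertion $ii)$, I would combine the above argument with the odd-edge saving from the proof of Lemma~\ref{lem:nodeandtuplecount}\,$ii)$: in the presence of an edge of odd multiplicity in $(\ubar{s},\ubar{t})$, one may begin the first tour at a T-node placed immediately after two same-direction instances of that edge, so that one occurrence of the odd edge discovers no new vertex. The first tour then yields at most $\sum_i\rho_i$ vertices, and the second tour, started independently at any shared vertex, adds at most $\sum_i\rho_i-\ell$ further vertices, for a total of $2\sum_i\rho_i-\ell$. I do not expect a serious obstacle, since each ingredient is directly inherited from Lemma~\ref{lem:nodeandtuplecount}; the only mild subtlety is to verify that the two starting-vertex choices---the strategic one inside $(\ubar{s},\ubar{t})$ for the odd-edge saving, and a shared vertex for anchoring the second tour---can be made independently, which is clear because they concern different cycles.
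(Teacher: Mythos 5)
Your argument is correct and is essentially the paper's own proof: tour the first Eulerian cycle (with the odd-edge starting trick for part $ii)$), then tour the second cycle anchored at a vertex of a common edge, noting that the $\ell$ shared edges cannot discover new vertices, and finish by applying Lemma~\ref{lem:maxnodestuples}\,$ii)$ to the concatenated tuples. No gaps; the independence of the two starting-vertex choices is exactly as you say.
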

\begin{proof}
For statement $ii)$ we assume w.l.o.g.\ that $(\ubar{s},\ubar{t})$ has an odd edge. Since the graphs spanned by $(\ubar{s},\ubar{t})$ and $(\ubar{s}',\ubar{t}')$ share $\ell\geq 1$ common edges, we may take a tour around the joint Eulerian circle, starting before a common edge, traveling first all edges of $(\ubar{s},\ubar{t})$ and then all edges of $(\ubar{s}',\ubar{t}')$. While walking the edges of $(\ubar{s},\ubar{t})$, we can see at most $\rho_1 + \ldots + \rho_{2k}$ different nodes by Lemma~\ref{lem:nodeandtuplecount}. Next, traveling all edges of $(\ubar{s}',\ubar{t}')$, at most all the single edges and first instances of $m$-fold edges with $m\in\{2,\ldots,2k\}$ of $(\ubar{s}',\ubar{t}')$ may discover a new node, but only if they have not been traversed before during the walk along $(\ubar{s},\ubar{t})$. Since we have $\ell$ common edges, we can see at most $\rho'_1+\ldots + \rho'_{2k}-\ell$ new nodes. We established the bounds on the number of vertices in $ii)$. The second statement in $ii)$ follows immediately with Lemma~\ref{lem:maxnodestuples} $ii)$ by concatenating $(\ubar{s},\ubar{s}')\in\oneto{p}^{2k}$ and $(\ubar{t},\ubar{t}')\in\oneto{n}^{2k}$. For statement $i)$ we proceed exactly in the same manner: Traveling $(\ubar{s},\ubar{t})$ we can see at most $1 + \rho_1 + \rho_2 + \ldots + \rho_{2k}$ nodes by Lemma~\ref{lem:nodeandtuplecount}, then traveling $(\ubar{s}',\ubar{t}')$ we can see at most $\rho'_1+\ldots + \rho'_{2k}-\ell$ new nodes. Now apply Lemma~\ref{lem:maxnodestuples} $ii)$ again.
\end{proof}

In the following subcases, we assume that the number of common edges $\ell\in\{1,\ldots,2k\}$ is fixed and that we consider the subsum over $\Tcal_{c(\ell)}^{p,n}(\rho)$ in \eqref{eq:varanalysis}.\newline
\underline{Subcase 1: $\rho$ has only even edges.}\newline
Then by Lemma~\ref{lem:doublebound},
\[
\#\Tcal_{c(\ell)}^{p,n}(\rho) \leq (2k)^{4k+2} (n\vee p)^{1 + 2k - \ell}\leq (2k)^{4k+2} (n\vee p)^{2k}.
\]
Therefore, the partial sum in \eqref{eq:varanalysis} over $\Tcal_{c(\ell)}^{p,n}(\rho)$ is bounded by
\[
\frac{1}{p^2n^{2k}}\cdot (2k)^{4k+2} (n\vee p)^{2k} \cdot \#\Ncal\Pcal\Pcal(4k) \xrightarrow[n\to\infty]{} 0 \quad \text{summably fast.}
\]
\underline{Subcase 2: $\rho$ has an odd edge.}
\newline
Then by Lemma~\ref{lem:doublebound},
\[
\#\Tcal_{c(\ell)}^{p,n}(\rho) \leq (2k)^{4k+2} (n\vee p)^{\rho_1 + 2k-\ell}.
\]
Now let $\vartheta\in\Ncal\Pcal\Pcal(4k)$ be fixed. Then for each common edge, $\vartheta$ might pair a single edge in $(\ubar{s},\ubar{t})$ and the same single edge in $(\ubar{s}',\ubar{t}')$, leading to a factor of $1$ in the product of covariances. This can happen at most $\ell$ times in the worst case. Also in the worst case, $\vartheta$ could then pair all the remaining $(\rho_1-\ell)\vee 0$ single edges in $(\ubar{s},\ubar{t})$ with each other, and likewise the remaining $(\rho_1-\ell)\vee 0$ single edges in $(\ubar{s}',\ubar{t}')$ with each other, leading to at least $[(\rho_1-\ell)\vee 0 + (\rho_1-\ell)\vee 0]/2= (\rho_1-\ell)\vee 0 $ factors of decay in the product of covariances in \eqref{eq:varanalysis}. 
Therefore, the partial sum in \eqref{eq:varanalysis} over $\Tcal_{c(\ell)}^{p,n}(\rho)$ is bounded by
\[
 \frac{1}{p^2n^{2k}}\cdot (2k)^{4k+2} (n\vee p)^{2k+\rho_1 -\ell} \cdot \#\Ncal\Pcal\Pcal(4k)\cdot \frac{a_n^{(\rho_1-\ell)\vee 0}}{n^{(\rho_1-\ell)\vee 0}}  \xrightarrow[n\to\infty]{} 0 \quad \text{summably fast.}
\]
This concludes the proof.

\section{Proof of Theorems~\ref{thm:opnorm} and~\ref{thm:rankoneexample}: Analysis of the Largest Eigenvalue.}
\label{sec:Eval}

In this section, we analyze the largest eigenvalue of $V_n=\frac{1}{n}X_n X_n^T$ where $X_n$ satisfies (A1), \eqref{eq:deltacondition} and (A3), so that the correlations of the ensemble are uniformly bounded as follows:
\begin{equation*}
\forall\ (a,b)\neq(c,d)\in\oneto{p}\times\oneto{n}:\ \abs{\E X_n(a,b)X_n(c,d)} = \abs{\Sigma_n((a,b),(c,d))} \leq \frac{C}{n^{\delta}}.\label{eq:corrdecay2}
\end{equation*}
for some fixed $C,\delta>0$, where we assume for notational simplicity and w.l.o.g.\ that $C=1$. We will show that if $\delta>1$ we can guarantee almost sure convergence to the operator norm of $V_n$ to $y_+\defeq (1+\sqrt{y})^2$, which is the maximum of the bounded support of the Marchenko-Pastur distribution $\mu^y$, where $y=\lim_{n\to \infty} p/n$. On the other hand, if $\delta<1$, we can find an equicovariant ensemble matching above mentioned conditions and for which the operator norm of $V_n$ diverges to infinity almost surely. Lastly, if $\delta=1$, we can find an ensemble matching (A1), \eqref{eq:deltacondition}, and (A3), for which the operator norm remains stochastic in the limit. The precise results for the cases $\delta< 1$ and $\delta=1$ constitute Theorem~\ref{thm:rankoneexample}, while qualitative versions of these results are mentioned in parts $ii)$ and $iii)$ of Theorem~\ref{thm:opnorm}. This section is organized as follows: In Subsection~\ref{sec:convergence}, we show convergence of the operator norm for $\delta>1$, thus proving Theorem~\ref{thm:opnorm} $i)$. A key lemma which is used here is proved in Subsection~\ref{sec:boundproof}.
Subsection~\ref{sec:divergence} contains the proof of Theorem~\ref{thm:rankoneexample}, where we construct the counterexamples for the cases $\delta<1$ and $\delta=1$. From these counterexamples, statements $ii)$ and $iii)$ of Theorem~\ref{thm:opnorm} follow.

\subsection{Convergence of the operator norm}
\label{sec:convergence}

To show convergence of the operator norm in the case $\delta>1$, we follow the general strategy outlined in \cite{geman:1980}. For $\delta>1$, Theorem~\ref{thm:MPlaw} implies that the Marchenko-Pastur law holds almost surely, from which it follows that $\liminf \lambda_{\max}(V_n) \geq y_+$ almost surely, so it suffices to show $\limsup \lambda_{\max}(V_n) \leq y_+$ almost surely. To this end, we fix $z > y_+$ and use 
\[
\Prob(\lambda_{\max}(V_n) > z)\ =\ \Prob\left(\left(\frac{\lambda_{\max}(V_n)}{z}\right)^{k_n}>1\right) \leq \E \left(\frac{\lambda_{\max}(V_n)}{z}\right)^{k_n}
\]
and show 
\[
\sum_{n\in\N} \E \left(\frac{\lambda_{\max}(V_n)}{z}\right)^{k_n} < \infty
\]
for a suitable integer sequence $(k_n)_n \to \infty$. By the  Borel--Cantelli lemma, this will imply $\Prob(\limsup\lambda_{\max}(V_n)\leq z) =1$. Writing $k=k_n$ and using
\[
\lambda_{\max}(V_n)^k\ \leq\ \lambda_{1}(n)^k + \ldots + \lambda_{p}(n)^k \ =\ \tr V_n^k,
\]
it suffices to find a sequence $k=k_n$ such that the terms
\begin{equation}
\label{eq:toanalyze}
\frac{1}{z^{k}} \E \tr V_n^{k}\ =\ \frac{1}{z^{k}n^{k}} \sum_{\ubar{s}\in\oneto{p}^{k},\ubar{t}\in\oneto{n}^{k}}\E X_n(\ubar{s},\ubar{t})
\end{equation}
are summable over $n$. To this end, as in \cite{geman:1980} we will choose
\begin{equation}
\label{eq:kndef}
k = \lfloor w\log(n)\rfloor,\quad \text{ where $w>5$ is a constant with }	\quad w\log(y_+/z) < -3.
\end{equation}

Our analysis will greatly depend on the number of singles and doubles present in the paths $(\ubar{s},\ubar{t})$ in \eqref{eq:toanalyze}, and also on the number of distinct nodes that these paths visit. In this respect, the following lemma provides valuable information. For any given path $(\ubar{s},\ubar{t})$ we denote by $r\defeq r(\ubar{s},\ubar{t}) \defeq \# V(\ubar{s})$ the number of rows and by $c\defeq c(\ubar{s},\ubar{t})$ the number of columns that $(\ubar{s},\ubar{t})$ visits, and we set $\ell\defeq r+c$.

\begin{lemma}
\label{lem:pathcomb}
Let $(\ubar{s},\ubar{t})\in \oneto{p}^k\times\oneto{n}^k$ be arbitrary.  Consider $\ell\defeq r+c$ and the product $X_n(\ubar{s},\ubar{t})$ as defined in \eqref{eq:STwalk}. Then the following statements hold:
\begin{enumerate}[i)]
\item $X_n(\ubar{s},\ubar{t})$ has at least $\ell-1$ distinct entries. 
\item $X_n(\ubar{s},\ubar{t})$ has at most $\min(2k,(\ell/2)^2)$ distinct entries.
\item If $X_n(\ubar{s},\ubar{t})$ does not contain singles, then $2\leq \ell\leq k+1$.
\item If $k+2\leq \ell \leq 2k$, then $X_n(\ubar{s},\ubar{t})$ contains at least $2\ell-2k-2 \geq 2$ singles.
\item If $X_n(\ubar{s},\ubar{t})$ has no singles, so $2\leq \ell \leq k+1$, then $X_n(\ubar{s},\ubar{t})$ has at least $3\ell - 2k -3$ doubles.
\item If $X_n(\ubar{s},\ubar{t})$ has $s\geq 1$ singles, then it has at least $3\ell -2s -2k -3$ doubles.
\item If $X_n(\ubar{s},\ubar{t})$ has $s\geq 1$ singles, then $(4\vee 2\sqrt{s})\leq \ell \leq k+s/2$.
\end{enumerate}
\end{lemma}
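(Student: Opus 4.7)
The plan is to translate everything into statements about the Eulerian bipartite multigraph $\mathcal{G}(\ubar{s},\ubar{t})$. Writing $\rho_m = \rho_m(\ubar{s},\ubar{t})$ for the number of distinct $m$-fold edges and $e \defeq \sum_{m\geq 1}\rho_m$ for the total number of distinct edges -- which is exactly the number of distinct entries of $X_n(\ubar{s},\ubar{t})$ -- and writing $s = \rho_1$ for the number of singles, the two identities available are
\[
\sum_{m\geq 1}\rho_m = e,\qquad \sum_{m\geq 1} m\,\rho_m = 2k,
\]
the second being \eqref{eq:alledgessum}. The structural inequalities
\[
\ell - 1 \;\leq\; e \;\leq\; rc \;\leq\; (\ell/2)^2
\]
follow from connectivity of $\mathcal{G}(\ubar{s},\ubar{t})$ (left: spanning tree) and its bipartite form (right: AM-GM); together with the trivial $e \leq 2k$ these already give $i)$ and $ii)$.

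Parts $iii)$--$vi)$ are now arithmetic on the two identities. When $s=0$, $2e \leq \sum_{m\geq 2} m\,\rho_m = 2k$, so $e \leq k$ and $\ell \leq e + 1 \leq k+1$, proving $iii)$. For $iv)$, applied in general the same multiplicity bound yields $s + 2(e-s) \leq 2k$, i.e.\ $s \geq 2e - 2k \geq 2\ell - 2k - 2$. For $v)$ and $vi)$, I would rewrite
\[
\sum_{m\geq 3}(m-2)\,\rho_m \;=\; 2k - s - 2(e-s) \;=\; 2k + s - 2e
\]
and use $\rho_m \leq (m-2)\rho_m$ for $m\geq 3$ to bound $\sum_{m\geq 3}\rho_m \leq 2k + s - 2e$, which gives
\[
\rho_2 \;=\; (e-s) - \sum_{m\geq 3}\rho_m \;\geq\; 3e - 2k - 2s \;\geq\; 3\ell - 2k - 2s - 3.
\]
Specializing to $s=0$ yields $v)$, while $s \geq 1$ is $vi)$.

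The only step that requires real work is the improvement $e \geq \ell$ (``$\mathcal{G}(\ubar{s},\ubar{t})$ is not a tree'') whenever $s\geq 1$, which is needed for the upper bound $\ell \leq k + s/2$ in $vii)$. To prove it, suppose the underlying simple graph is a tree and let $e^{\star} = \{u,w\}$ be any single edge. Removing $e^\star$ splits the tree into subtrees $T_u \ni u$ and $T_w \ni w$; in the sub-multigraph induced on the vertex set of $T_u$, every vertex other than $u$ keeps its (even) degree from the Eulerian multigraph, whereas $u$'s degree decreases by the multiplicity of $e^\star$, which is $1$, so $u$ is the unique odd-degree vertex, contradicting the handshake lemma. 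Hence $e \geq \ell$, and combined with the rewriting $e \leq k + s/2$ of the multiplicity bound this yields $\ell \leq k + s/2$. The remaining inequalities in $vii)$ are routine: $\ell \geq 2\sqrt{s}$ follows from $s \leq e \leq (\ell/2)^2$, and $\ell \geq 4$ from the observation that $\ell \in \{2,3\}$ forces every vertex on the smaller side of the bipartition to have its unique incident edge of even multiplicity, precluding any single. The parity argument establishing $e \geq \ell$ is really the only non-mechanical piece; everything else is arithmetic on the two basic identities.
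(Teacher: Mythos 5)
Your proposal is correct, and the overall strategy --- bookkeeping on the multiplicity profile via the two identities $\sum_m\rho_m=e$ and $\sum_m m\rho_m=2k$ together with the connectivity bound $e\geq\ell-1$ and the bipartite bound $e\leq rc\leq(\ell/2)^2$ --- is the same as the paper's; parts $i)$--$vi)$ match the paper's argument essentially line for line (the paper is terser, dispatching $v)$ and $vi)$ with ``similarly to $iv)$''). The one place you genuinely diverge is the key inequality $\ell\leq e$ needed for the upper bound in $vii)$: the paper gets this by invoking its Lemma~3~$ii)$, whose proof is an Eulerian-tour argument (in the presence of an odd edge one can choose the starting vertex of the tour so that no instance of that edge discovers a new node), whereas you prove the equivalent statement ``the underlying simple graph is not a tree when $s\geq 1$'' by a cut-parity argument: a single edge of a spanning tree would be an edge cut of odd total multiplicity, isolating an induced sub-multigraph with exactly one odd-degree vertex, contradicting the handshake lemma. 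Your argument is self-contained and arguably cleaner for this special case (it is the standard fact that every edge cut of an Eulerian multigraph has even multiplicity), though it only covers single edges rather than general odd-multiplicity edges, which is all that $vii)$ requires. One trivial slip: in the $\ell\in\{2,3\}$ case it is the vertices on the \emph{larger} side of the bipartition (those adjacent to only one vertex) whose unique distinct incident edge must have even multiplicity equal to their degree; the conclusion $\ell\leq 3\Rightarrow s=0$ is nonetheless correct.
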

\begin{proof}
For $i)$, note that after the first entry, there are $r-1$ row and $c-1$ column innovations left. For $ii)$, we observe that a bipartite graph with $r$ nodes on the S-line and $c$ nodes on the T-line has at most $r\cdot c\leq (\ell/2 )^2$ edges. For $iii)$, if $r+c\geq k+2$ were possible, then $X_n(\ubar{s},\ubar{t})$ would have at least $k+1$ entries, which is impossible if each one is to occur at least twice. For $iv)$, note that there are $z\geq \ell -1$ distinct variables. Delete from each a copy, then there are $2k -z$ variables left, none of which were singles. Therefore, $2k-z$ is an upper bound on the variables with multiplicity $\geq 2$, so $z - (2k -z) = 2z -2k$ is a lower bound on the singles. It follows that $2\ell - 2k -2$ is also a lower bound. Statements $v)$ and $vi)$ are proven similarly to $iv)$. For $vii)$, the lower bound $\ell\geq 4$ is elementary. For the other lower bound, note that $X_n(\ubar{s},\ubar{t})$ has at most $(\ell/2)^2$ entries by $ii)$, thus $s \leq (\ell/2)^2$. The upper bound follows with the first statement in Lemma~\ref{lem:nodeandtuplecount} b).
\end{proof}
Returning to \eqref{eq:toanalyze}, we denote by $\alpha_{\ell,s}$ a valid bound for any $\E X_n(\ubar{s},\ubar{t})$, where the path $(\ubar{s},\ubar{t})$ visits $\ell$ nodes and has $s$ singles. Likewise, by $\beta_{\ell,s}$ we denote an upper bound on the number of paths $(\ubar{s},\ubar{t})\in\oneto{p}^k\times\oneto{n}^k$ with $s$ singles and $\ell$ different nodes. We point out that the constants $\alpha_{\ell,s}$ and $\beta_{\ell,s}$ also depend on the length $2k$ of the cycles and on $n$, but we will suppress this dependence in what follows (except in the proof of Lemma~\ref{lem:alphabetabounds}). We can now bound \eqref{eq:toanalyze} by
\begin{equation}
\label{eq:toanalyze2}
\frac{1}{z^{k}n^{k}} \sum_{s=0}^{2k}\sum_{\ell=2}^{2k}\alpha_{\ell,s}\beta_{\ell,s} = \frac{1}{z^{k}n^{k}}\sum_{\ell=2}^{k+1}\alpha_{\ell,0}\beta_{\ell,0} + \frac{1}{z^{k}n^{k}}\sum_{s=1}^{2k}\sum_{\ell=4\vee2\sqrt{s}}^{k+s/2}\alpha_{\ell,s}\beta_{\ell,s},
\end{equation}
where we also used Lemma~\ref{lem:pathcomb}.
The following lemma establishes bounds on the terms $\alpha_{\ell,s}$ and $\beta_{\ell,s}$ in \eqref{eq:toanalyze2}.

\begin{lemma}
\label{lem:alphabetabounds}
If $(\ubar{s},\ubar{t})$ has $0\leq s \leq 2k$ single edges, then for all $n$ large enough (independent of $s$ and $\ell$) we have the following valid bounds on $\alpha_{\ell}$ and $\beta_{\ell}$ for $\ell \in \{2,\ldots,k+1\}$, where $\beta>y_+$ is arbitrary but fixed. 
\begin{enumerate}[a)]
\item $\alpha_{\ell} \leq \#\Pcal\Pcal(2k) = (2k-1)!! = \frac{(2k)!}{2^k k!}\leq k^k$.
\item $\alpha_{\ell} \leq 4(2k)^{(6(k-\ell) + 6 + 4s)\wedge 2k}\frac{1}{n^{\delta s/2}}$.
\item $\beta_{\ell,s}\leq \beta^{\ell} n^{\ell} \ell^{4k}$.
\item $\beta_{\ell,s}\leq (2k)^{2k-s} \binom{2k}{\ell}\beta^{\ell}n^{\ell} \ell^{2k-\ell}$.
\item Whenever $\ell\geq \frac{w-1}{w}(k+s)$, we have 
\begin{equation}
\label{eq:betaellsbound}
\beta_{\ell,s} \leq (1 + \sqrt{p/n})^{2(k+s)} n^{\ell} k^{12(k+s-\ell) + 14}.
\end{equation}
\end{enumerate}
\end{lemma}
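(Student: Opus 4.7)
The plan is to treat the five bounds independently. Bounds (a) and (b) rely on Wick's theorem (Theorem~\ref{thm:wick}) to control the expectation, while (c)--(e) are purely combinatorial path counts building on Lemma~\ref{lem:maxnodestuples} and Lemma~\ref{lem:pathcomb}.

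\textbf{Bound (a).} I would apply Theorem~\ref{thm:wick} to write $\E X_n(\ubar{s},\ubar{t})$ as a sum of $\#\Pcal\Pcal(2k)=(2k-1)!!$ products of covariances. Each covariance is at most $1$ in absolute value, by Cauchy--Schwarz combined with the unit-variance assumption (A1), so every summand is $\leq 1$ and $\alpha_\ell \leq (2k-1)!!$. The identity $(2k-1)!!=(2k)!/(2^k k!)$ is standard; AM--GM applied to the sequence $1,3,\ldots,2k-1$ (whose arithmetic mean equals $k$) then gives $(2k-1)!! \leq k^k$.

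\textbf{Bound (b).} Still using Wick's theorem, observe that any single edge appears exactly once among the $2k$ edges, so every pair partition $\pi$ necessarily pairs such a single with a \emph{different} edge, producing an off-diagonal covariance factor bounded by $n^{-\delta}$. Since each pair absorbs at most two singles, every $\pi$ yields at least $\lceil s/2 \rceil \geq s/2$ such factors, accounting for the $n^{-\delta s/2}$. For the count of contributing pair partitions, the plan is to split each $\pi$ into pairs that respect the multi-edge equivalence classes and pairs that cross them; the crossings are driven by the $s$ singles and by the deficit $k+s/2-\ell$ measuring how far the graph is from the extremal configuration $\ell=k+s/2$ of Lemma~\ref{lem:pathcomb}~vii). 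Enumerating the placements of these crossings over the $2k$ positions yields the factor $(2k)^{6(k-\ell)+6+4s}$, capped at the unconditional $(2k)^{2k}$ whenever the latter is smaller, with an absolute constant $4$ to absorb small slack.

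\textbf{Bound (c).} The approach is to invoke Lemma~\ref{lem:maxnodestuples}~i) and sum over $(r,c)$ with $r+c=\ell$: the number of paths with exactly $r$ row nodes and $c$ column nodes is bounded by $(rc)^k p^r n^c$. Rewriting $p^r n^c = (p/n)^r n^\ell$ and using $p/n<\beta$ for $n$ large (possible since $\beta>y_+>y$), one gets $(p/n)^r \leq \beta^\ell$. The combinatorial factor $(rc)^k \leq (\ell/2)^{2k} \leq \ell^{2k}$, and the sum over the at most $\ell$ admissible pairs $(r,c)$ is comfortably absorbed into $\ell^{4k}$.

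\textbf{Bound (d).} I would refine (c) by parametrizing each path by the positions of its $\ell$ innovation steps (steps which discover a new node) and its $2k-\ell$ revisit steps: at most $\binom{2k}{\ell}$ choices for the innovation pattern, at most $\ell^{2k-\ell}$ assignments of revisits to one of the $\ell$ already-seen nodes, and at most $\beta^\ell n^\ell$ choices for the innovation values, exactly as in (c). The extra $(2k)^{2k-s}$ factor then accounts for the freedom in how the $2k-s$ non-single edges cluster into equal classes among the revisit positions.

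\textbf{Bound (e).} The plan is to follow the Geman-type argument from \cite{geman:1980}. The hypothesis $\ell\geq \frac{w-1}{w}(k+s)$ makes $k+s-\ell$ small, so the underlying bipartite graph is close to a ``double tree'' in which almost all edges are traversed exactly twice. A Catalan-type enumeration of such nearly-tree paths, recycled from the sample-covariance version of Geman's argument, produces the dominant factor $(1+\sqrt{p/n})^{2(k+s)}$; each of the at most $k+s-\ell$ deviations from the tree structure is paid for by a polynomial cost in $k$, giving the factor $k^{12(k+s-\ell)+14}$; and the $n^\ell$ counts the vertex labels.

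The main obstacle will be bound (e), since the tree-walk enumeration together with the precise polynomial cost per non-tree deviation must produce exponents $12$ and $14$ sharp enough that, when reinserted into \eqref{eq:toanalyze2} with $k = \lfloor w\log n\rfloor$, the resulting series is summable in $n$. Bound (b) is a close second: identifying the contributing pair partitions with the exact count $(2k)^{6(k-\ell)+6+4s}$ requires a careful matching of crossings against singles and against the deficit $k+s/2-\ell$.
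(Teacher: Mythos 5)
Parts a), c) and d) of your proposal are correct and essentially coincide with the paper's proof (for c) the paper simply cites Geman's Lemma 2(a), whereas you give the self-contained count via Lemma~\ref{lem:maxnodestuples}; that is a harmless difference). The genuine gaps are in b) and e). For b), your decay count is fine: every block of $\pi$ containing a single-edge variable pairs two distinct index pairs and hence contributes an off-diagonal entry of $\Sigma_n$, and each block absorbs at most two singles, giving at least $s/2$ factors of $n^{-\delta}$. But your count of pair partitions does not work as described. A priori \emph{all} $(2k-1)!!\approx k^k$ pair partitions achieve the worst-case decay $n^{-\delta s/2}$, and $k^k$ vastly exceeds $(2k)^{6(k-\ell)+6+4s}$ when $\ell$ is near its maximum, so "enumerating placements of crossings over the $2k$ positions" cannot yield the stated factor. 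The actual mechanism isolates the doubles: let $d$ be the number of distinct double edges. For each fixed pairing of the $2k-2d$ non-double variables, the sum over all pairings of the $2d$ double variables costs essentially a constant, because the unique pairing matching each double with its twin contributes $1$ while every other pairing has $q\geq 2$ void blocks, so their total is at most $\sum_{q\geq 2} d^{2q}n^{-q\delta}\leq 1$ for $n$ large (with a parallel estimate for partitions traversing between double and non-double variables, each traversing block costing another $n^{-\delta}$). The surviving combinatorial factor is therefore only $\#\Pcal\Pcal(2k-2d)\leq (2k)^{2k-2d}$, and the exponent $6(k-\ell)+6+4s$ is precisely the bound on $2k-2d$ obtained from Lemma~\ref{lem:pathcomb}~vi), namely $d\geq 3\ell-2s-2k-3$ --- not from part vii) as you suggest. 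Without this separation of the doubles the bound does not follow.

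For e), redoing the Geman tree enumeration directly for paths containing $s$ singles is the wrong route: it leaves unexplained why the exponents involve $k+s$ rather than $k$, and it would force you to reprove the Catalan-type count in a strictly more general setting where the graph need not be close to a double tree at all. The paper's proof rests on a one-line reduction you are missing: given a path with $s\geq 1$ singles, insert two extra copies of each single edge immediately after its first traversal; this is an injection into singles-free paths of length $2(k+s)$ with the same $\ell$ nodes, so $\beta^{(2k)}_{\ell,s}\leq \beta^{(2(k+s))}_{\ell,0}$. Geman's Lemma 2(b) then applies verbatim with $k$ replaced by $k+s$, which is exactly where the factors $(1+\sqrt{p/n})^{2(k+s)}$ and $k^{12(k+s-\ell)+14}$ come from, and the hypothesis $\ell\geq\frac{w-1}{w}(k+s)$ is precisely what is needed to verify, uniformly in $s$, the single inequality (Geman's equation (13)) on which that lemma depends. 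You should adopt this reduction; as written, your part e) is a restatement of the desired bound rather than a proof of it.
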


We defer the proof of the bounds in Lemma~\ref{lem:alphabetabounds} to Subsection~\ref{sec:boundproof} and take them for granted for now. We would like to see that the r.h.s.\ of \eqref{eq:toanalyze2} is summable over $n$. For the first term, this follows directly from the analysis in \cite{geman:1980}. To see this, note that for $s=0$, the bounds of Lemma~\ref{lem:alphabetabounds} $a)$, $b)$, $c)$ and $e)$ coincide with the bounds of Lemma 1 and Lemma 2 in \cite{geman:1980}, the only difference being a prefactor of $4$ in our bound b), which does not affect the analysis. Therefore, it suffices to establish that 

\begin{equation}
\label{eq:secondsum}
\frac{1}{z^{k}n^{k}}\sum_{s=1}^{2k}\sum_{\ell=4\vee2\sqrt{s}}^{k+s/2}\alpha_{\ell,s}\beta_{\ell,s}
\end{equation}
is summable over $n$. Here, we distinguish between three regimes: The case where $s$ and $\ell$ are both small, the case where $s$ is small and $\ell$ is large, and the case where $s$ is large and $\ell$ is arbitrary. To clarify where we differentiate between ``large" and ``small", we consult Lemma~\ref{lem:alphabetabounds}. We see that bound $e)$ is only meaningful if
\[
\frac{w-1}{w}(k+s) \leq k+s/2 \quad \Leftrightarrow \quad  s\leq \frac{2k}{w-2}.
\]
This is the small $s$ regime, the complement is the large $s$ regime. Further, if $s$ is small, then compatible with $e)$,  $\ell< (w-1)/w(k+s)$ is regarded small, and  $\ell\geq (w-1)/w(k+s)$ is regarded large. We will use the following basic lemma several times in what follows.
\begin{lemma}
\label{lem:termsummable}
For any constants $a,c,d>0$ the term
\[
\frac{(c\log(n))^{d\log(n)}}{n^{a\log(n)}}
\]
is summable over $n$.	
\end{lemma}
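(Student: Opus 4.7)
The plan is to take logarithms and compare the resulting expression to $-a(\log n)^2$, which dominates all competing terms. Writing the general term as
\[
T_n \;\defeq\; \frac{(c\log n)^{d\log n}}{n^{a\log n}} \;=\; \exp\!\bigl(f(n)\bigr),\qquad f(n) \;=\; d\log(n)\bigl(\log c + \log\log n\bigr) - a(\log n)^2,
\]
I would first observe that as $n\to\infty$ the term $a(\log n)^2$ strictly dominates $d(\log n)(\log\log n)$ and $d(\log c)(\log n)$, since $\log\log n = o(\log n)$. Therefore there is some $N=N(a,c,d)$ such that for all $n\geq N$,
\[
f(n) \;\leq\; -\tfrac{a}{2}(\log n)^2.
\]

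The second step is to convert this into a summable bound. For $n\geq N$ we have $T_n \leq \exp(-\tfrac{a}{2}(\log n)^2) = n^{-\tfrac{a}{2}\log n}$. Choosing $N'\geq N$ large enough that $\tfrac{a}{2}\log n\geq 2$ for all $n\geq N'$, we obtain $T_n \leq n^{-2}$ for $n\geq N'$. Since $\sum_{n\geq N'} n^{-2}<\infty$ and the initial segment $\sum_{n<N'} T_n$ is a finite sum, summability of $(T_n)_n$ follows.

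The only subtle point is verifying the asymptotic domination cleanly; I would spell it out by factoring $(\log n)$ out of $f(n)$:
\[
f(n) \;=\; \log(n)\bigl[\,d\log c + d\log\log n - a\log n\,\bigr],
\]
and noting that the bracket tends to $-\infty$, in fact is bounded above by $-\tfrac{a}{2}\log n$ once $\log n$ exceeds $2(d\log c)^+/a + $ (a threshold coming from $d\log\log n \leq \tfrac{a}{4}\log n$ for large $n$). No genuine obstacle arises here; the lemma is essentially a book-keeping statement that $(\log n)^2$ in the exponent beats $(\log n)(\log\log n)$, and converts the super-polynomial decay $n^{-a\log n/2}$ into summability.
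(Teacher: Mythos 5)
Your proof is correct and follows essentially the same route as the paper: take logarithms, factor out $\log n$, note that the bracket $d\log c + d\log\log n - a\log n$ tends to $-\infty$ and is eventually $\leq -2$, which gives $T_n \leq n^{-2}$ for large $n$ and hence summability. The intermediate bound $f(n)\leq -\tfrac{a}{2}(\log n)^2$ is a harmless extra step; nothing is missing.
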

\begin{proof}
Taking the $\log$ of the term and dividing by $\log(n)$ yields
\[
d(\log(c) + \log\log(n)) - a\log(n),
\]
which is smaller than $-2$ for all $n$ large enough. Since $\sum_{n=1}^{\infty} n^{-2}$ is finite, the desired result follows.
\end{proof}

\noindent\underline{Case 1: $s$ is small and $\ell$ is large}\newline
In this case, we apply the bounds b) and e) of Lemma~\ref{lem:alphabetabounds} and calculate
\begin{align}
&\frac{1}{z^{k}n^{k}}\sum_{s=1}^{\frac{2k}{w-2}}\sum_{\ell=\frac{w-1}{w}(k+s)}^{k+\frac{s}{2}}\alpha_{\ell,s}\beta_{\ell,s}\notag\\
&\leq\frac{1}{z^{k}n^{k}}\sum_{s=1}^{\frac{2k}{w-2}}\frac{1}{n^{\delta s/2}} \sum_{\ell=\frac{w-1}{w}(k+s)}^{k+\frac{s}{2}}4(2k)^{6(k-\ell)+6+4s}(1 + \sqrt{p/n})^{2(k+s)} n^{\ell} (k+s)^{12(k+s-\ell) + 14}\notag\\
&\leq 4\left(\frac{(1 + \sqrt{p/n})^2}{z}\right)^k(2k)^{20} \sum_{s=1}^{\frac{2k}{w-2}}\frac{\left[(2k)^4 (1+\sqrt{p/n})^2 \right]^s}{n^{\delta s/2}}(k+s)^{12s}\sum_{\ell = \frac{w-1}{w}(k+s)}^{k+\frac{s}{2}}\left(\frac{(2k)^6(k+s)^{12}}{n}\right)^{k-\ell}\notag\\
&\leq 4\left(\frac{(1 + \sqrt{p/n})^2}{z}\right)^k(2k)^{20} \sum_{s=1}^{\frac{2k}{w-2}}\frac{\left[(2k)^4 (1+\sqrt{p/n})^2 \right]^s}{n^{\delta s/2}}(k+s)^{12s+1}\left(\frac{n}{(2k)^6(k+s)^{12}}\right)^{\frac{s}{2}}\notag\\
&= 4\left(\frac{(1 + \sqrt{p/n})^2}{z}\right)^k(2k)^{20} \sum_{s=1}^{\frac{2k}{w-2}}\frac{\left[2k (1+\sqrt{p/n})^2 \right]^s}{n^{(\delta-1) s/2}}(k+s)^{6s+1}\notag\\
&\leq 4\left(\frac{(1 + \sqrt{p/n})^2}{z}\right)^k(2k)^{21} \sum_{s=1}^{\frac{2k}{w-2}}\frac{\left[(2k)^7 (1+\sqrt{p/n})^2 \right]^s}{n^{(\delta-1) s/2}}\label{eq:smalllarge}
\end{align}
which is summable over $n$, since the last sum remains bounded by some constant $C$. Then, with $z > (1+\sqrt{y})^2$, $p/n\to y$ and \eqref{eq:kndef}, we find for any constants $c,d>0$ that for all $n$ large enough,
\[
\frac{\log\left[\left(\frac{(1 + \sqrt{p/n})^2}{z}\right)^k c\log(n)^d\right]}{\log(n)} \leq w\log\left(\frac{(1 + \sqrt{p/n})^2}{z}\right) + \frac{\log(c)}{\log(n)} + \frac{d\log\log(n)}{\log(n)} \leq -2,
\]
which shows summability of \eqref{eq:smalllarge}. Note that it was used that $s$ is small, since only for these small $s$ we may use the bound Lemma~\ref{lem:alphabetabounds} $e)$ for large $\ell$. \newline
\underline{Case 2: $s$ and $\ell$ are both small.}\newline
In this case, we employ the bounds $b)$ and $c)$ of Lemma~\ref{lem:alphabetabounds} and calculate

\begin{align}
&\frac{1}{z^{k}n^{k}}\sum_{s=1}^{\frac{2k}{w-2}}\sum_{\ell=4\vee 2\sqrt{s}}^{\frac{w-1}{w}(k+s)}\alpha_{\ell,s}\beta_{\ell,s}\ \leq\  \frac{1}{z^{k}n^{k}}\sum_{s=1}^{\frac{2k}{w-2}}\frac{1}{n^{\delta s/2}}\sum_{\ell=4\vee 2\sqrt{s}}^{\frac{w-1}{w}(k+s)}4(2k)^{2k}\beta^{\ell}n^{\ell} \ell^{4k}
\notag\\
&\leq \frac{4(2k)^{6k}\beta^{2k}}{z^{k}n^{k}}\sum_{s=1}^{\frac{2k}{w-2}}\frac{1}{n^{\delta s/2}}\sum_{\ell=4\vee 2\sqrt{s}}^{\frac{w-1}{w}(k+s)}n^{\ell}\
\leq \ \frac{4(2k)^{6k+1}\beta^{2k}}{z^{k}n^{k}}\sum_{s=1}^{\frac{2k}{w-2}}\frac{1}{n^{\delta s/2}}n^{\frac{w-1}{w}(k+s)}\notag\\
&\leq \frac{4(2k)^{6k+1}\beta^{2k}}{z^{k}n^{\frac{1}{w}k}}\sum_{s=1}^{\frac{2k}{w-2}}\frac{1}{n^{\left(\frac{\delta}{2} - \frac{w-1}{w}\right) s}}. \label{eq:case2}
\end{align}
To see that \eqref{eq:case2} is summable for any choice of $\delta>1$, we distinguish two cases: First, if $\delta/2 > (w-1)/w$, the sum in \eqref{eq:case2} remains bounded as $n\to\infty$ and summability of the entire term follows with Lemma~\ref{lem:termsummable}. On the other hand, if $\delta>1$ but $\delta/2\leq (w-1)/w$, the term in \eqref{eq:case2} is bounded by
\[
\frac{4(2k)^{6k+2}\beta^{2k}}{z^{k}n^{\frac{1}{w}k}}\cdot n^{\left(\frac{w-1}{w}-\frac{\delta}{2}\right) \frac{2k}{w-2}} = \frac{4(2k)^{6k+2}\beta^{2k}}{z^{k}n^{\frac{\delta-1}{w-2}k}},
\]
which is also summable with Lemma~\ref{lem:termsummable}.
\newline
\underline{Case 3: $s$ is large.}\newline
In this case, we apply the bounds $b)$ and $c)$ and calculate
\begin{align*}
&\frac{1}{z^{k}n^{k}}\sum_{s=\frac{2k}{w-2}}^{2k}\sum_{\ell=4\vee 2\sqrt{s}}^{k+s/2}\alpha_{\ell,s}\beta_{\ell,s}\ \leq\ \frac{1}{z^{k}n^{k}}\sum_{s=\frac{2k}{w-2}}^{2k}\sum_{\ell=4\vee 2\sqrt{s}}^{k+s/2}4(2k)^{2k}\frac{1}{n^{\delta s}}\beta^{\ell}n^{\ell}\ell^{4k}\\
&\leq \frac{2(2k)^{6k}\beta^{2k}}{z^k}\sum_{s=\frac{2k}{w-2}}^{2k}\sum_{\ell=4\vee 2\sqrt{s}}^{k+s/2}\frac{1}{n^k}\frac{1}{n^{\delta s}}n^{\ell}\ \leq\ \frac{2(2k)^{6k+1}\beta^{2k}}{z^k}\sum_{s=\frac{2k}{w-2}}^{2k}\frac{1}{n^{(\delta-1) s}}\\
&\leq \frac{2(2k)^{6k+2}\beta^{2k}}{z^k}\frac{1}{n^{(\delta-1) 2k/(w-2)}}\\
\end{align*}
which is summable by Lemma~\ref{lem:termsummable}.

\subsection{Proof of Lemma~\ref{lem:alphabetabounds}}
\label{sec:boundproof}
We recall that 
\begin{equation}
\label{eq:sumprod}
\E X_n(\ubar{s},\ubar{t}) = \sum_{\pi\in\Pcal\Pcal(2k)}\prod_{\{u,v\}\in\pi} \Sigma_n\left(e_u(\ubar{s},\ubar{t}),e_v(\ubar{s},\ubar{t})\right),	
\end{equation}
so a) follows immediately, since each factor in the product is bounded by $1$. Statement c) is trivial and follows from (the proof of) Lemma 2 (a) in \cite{geman:1980}. For d), we first allocate the non-single edges to $2k$ places, for which we have at most $(2k)^{2k-s}$ possibilities. This also determines the places of the singles. Then we pick the nodes which may be chosen freely, for which we have at most $\binom{2k}{\ell}$ possibilities. For the nodes that may be chosen freely, we have at most $\beta^{\ell}n^{\ell}$ choices. For all other nodes we have at most $\ell^{2k-\ell}$ choices. For $e)$, the statement is well-known for $s=0$, see \cite{geman:1980} Lemma 2 (b). If $(\ubar{s},\ubar{t})$ has $s\geq 1$ singles, repeat each single twice after its first appearance, yielding a unique path of length $2(k+s)$ without singles and with $\ell$ nodes. In formulas, we observe $\beta^{(2k)}_{\ell,s}\leq \beta^{(2(k+s))}_{\ell,0}$. 
Now the following statement holds: For all $n$ large enough we find that for all $s\geq 0$ and $\ell \geq ((w-1)/w) (k+s)$, $\beta^{(2(k+s))}_{\ell,0}$ satisfies the bound in \eqref{eq:betaellsbound}. This is readily seen by repeating the proof of Lemma 2 b) in \cite{geman:1980} after replacing $k$ with $k+s$ at each step. The only place where heed must be taken is equation (13) in \cite{geman:1980} where it must be argued that for all $n$ large enough, we find \emph{for all $s\geq 0$} that $2(k+s-\ell) + 2 \leq (\ell -1)/2$ whenever $\ell\geq ((w-1)/w)(k+s)$. But this follows since 
\[
2(k+s-\ell) + 2 \leq \frac{\ell -1}{2}\ \Leftrightarrow\ \frac{4}{5}(k+s)+1 \leq \ell
\]
and using that $w>5$ and $k\to\infty$, we find
\[
\ell\geq \frac{w-1}{w}(k+s)\ \Rightarrow \ \ell\geq\frac{4}{5}(k+s)+1
\]
for all $n$ large enough such that
\[
\frac{w-1}{w}k \geq \frac{4}{5}k +1.
\]
Note that this choice of $n$ is independent of $s$.

It remains to show b).
For b), we fix a path $(\ubar{s},\ubar{t})$ with $s$ single edges and with $r+c =\ell$. Denote by $d$ the number of distinct double edges in the path. By Lemma~\ref{lem:pathcomb} we know that 
\begin{equation}
\label{eq:dlower}
d\geq 3\ell-2s-2k-3.
\end{equation}
To bound \eqref{eq:sumprod}, we first consider the pair partitions in \eqref{eq:sumprod} which are contained in the sense that $\pi=(\pi',\pi'')$ where $\pi'$ is a pairing of the $2d$ variables from the $d$ doubles and $\pi''$ is a pairing of the remaining $2k-2d$ non-double variables, out of which $s$ are single. 

For the non-double variables, we get $\#\Pcal\Pcal(2k-2d)$ possible pairings, each leading to a contribution of at most $n^{\delta s/2}$ in the product over their blocks, since in the worst case, all singles are paired with each other, and the edges of multiplicity $\geq 3$ are also paired with each other.

For the doubles, we have exactly one partition $\pi^*\in\Pcal\Pcal(2d)$ which pairs all the doubles, leading to a contribution of exactly $1$ in the partial product over its blocks in \eqref{eq:sumprod}. All other $\pi'\in\Pcal\Pcal(2d)$ have $q\in\{2,\ldots,d\}$ void pairings (a void paring is a block which does not pair the two edges of a double edge) which then lead to a factor of at most $1/n^{q\delta}$ in the partial product over its blocks in \eqref{eq:sumprod}. Define for $q\in\{2,\ldots,d\}$,
\[
\Pcal\Pcal(2d)_q \defeq \{\pi'\in\Pcal\Pcal(2d)\,|\, \pi' \text{ has $q$ void pairings} \}.
\]
Note that each $\pi=(\pi',\pi'')\in\Pcal\Pcal(2k)$ with $\pi'\in\Pcal\Pcal(2d)_q$ yields a contribution of at most $1/n^{q\delta}$ in the sum \eqref{eq:sumprod}.

To construct a $\pi'\in\Pcal\Pcal(2d)_q$, we first have $\binom{d}{q}$ choices for the doubles which shall not be paired. Then we choose a pair partition $\vartheta\in\Pcal\Pcal(2q)$ which only has void pairings, for which we have at most $\#\Pcal\Pcal(2q)$ possibilities. For all other $d-q$ pairs there is only one pair partition in $\Pcal\Pcal(2(d-q))$ which pairs the pairs, so
\[
\#\Pcal\Pcal(2d)_q \leq d^q\cdot q^q \leq d^{2q}.
\]
As a result, for the sum in \eqref{eq:sumprod} over the contained pair partitions we obtain
\begin{align}
&\sum_{\substack{\pi\in\Pcal\Pcal(2k) \\ \text{$\pi$ contained}}}\prod_{\{u,v\}\in\pi} \abs{\Sigma_n\left(e_u(\ubar{s},\ubar{t}),e_v(\ubar{s},\ubar{t})\right)} \ \leq\ \frac{1}{n^{\delta s/2}}\#\Pcal\Pcal(2k-2d)\cdot \left(\sum_{q=0,2,3,\ldots}^d \frac{d^{2q}}{n^{q\delta}}\right)\notag \\
&\leq \frac{1}{n^{\delta s/2}}\#\Pcal\Pcal(2k-2d) \left(1 + \frac{d^5}{n^{2\delta}}\right) \ \leq\ 2(2k)^{2k-2d}\frac{1}{n^{\delta s/2}}
\label{eq:SinglesContained}
\end{align}
for all $n$ large enough. This finishes the analysis of contained partitions $\pi\in\Pcal\Pcal(2k)$.

It is left to analyze the contribution of the non-contained $\pi\in\Pcal\Pcal(2k)$ in \eqref{eq:sumprod}, that is, those pair partitions which pair a double variable with a non-double variable. Each such $\pi$ has
\begin{itemize}
\item at least $2$ traversings (i.e. blocks that pair a double variable with a non-double variable),
\item at most $2d\wedge (2k-2d)$ traversings,
\item always an even number of traversings.
\end{itemize}
Each traversing block yields a factor of decay of $1/n^{\delta}$. Let $t\defeq \#\text{traversings}/2$, $2t\in\{2,4,\ldots,(2d)\wedge(2k-2d)\}$. Then to determine the $2t$ traversing blocks, we first pick $2t$ out of the $2d$ double elements, and have for each at most $(2k-2d)$ choices for a partner among the $2k-2d$ variables, leading to at most $\binom{2d}{2t}(2k-2d)^{2t}$ choices for the traversing blocks. All other blocks contain either double variables only or non-double variables only. The $2k-2d-2t$ non-double variables contain at least $(s-2t)\vee 0$ singles, so we get a decay of at least $1/n^{\delta(s/2-t)}$, for each of the possible $\Pcal\Pcal(2k-2d-2t)$ pairings of these elements.

For the blocks over the remaining doubles we may have $v\in\{0,1,2,\ldots,d-t\}$ void pairings of the doubles, each leading to a decay of $1/n^{\delta v}$. To complete the pair partition, we need to determine the blocks for the remaining $2d-2t$ double elements in such a way that we get exactly $v\in\{0,1,2,\ldots,d-t\}$ void pairings. Due to the configuration of the traversing blocks, this might not be possible for a given $v$ ($0$ possibilities), but we can always construct an upper bound: So we assume that it is possible to have exactly $v$ void pairings. This leaves $2v$ double elements to be paired in a void way for which we have at most $\#\Pcal\Pcal(2v)$ choices. Since the remaining double elements are paired, this leaves only one choice for these blocks. Combining everything we just argued, we obtain

\begin{align}
&\sum_{\substack{\pi\in\Pcal\Pcal(2k) \\ \text{$\pi$ non-contained}}}\prod_{\{u,v\}\in\pi} \abs{\Sigma_n\left(e_u(\ubar{s},\ubar{t}),e_v(\ubar{s},\ubar{t})\right)} \notag\\
&\leq
 \sum_{t=1}^{d\wedge(k-d)} \frac{1}{n^{\delta 2t}}\binom{2d}{2t}(2k-2d)^{2t} 
 \sum_{v=0}^{d-t}\frac{1}{n^{v\delta}}\binom{d-t}{v}\#\Pcal\Pcal(2v)\frac{1}{n^{\delta(s/2-t)\vee 0}}\#\Pcal\Pcal(2k-2d-2t)\notag\\
 &\leq 
(2k)^{2k-2d} \sum_{t=1}^{d\wedge(k-d)} \underbrace{\frac{1}{n^{\delta 2t}}\frac{1}{n^{\delta(s/2-t)\vee 0}}}_{\leq \frac{1}{n^{\delta s/2}}\frac{1}{n^{\delta t}}}(2d)^{2t}
 \underbrace{\sum_{v=0}^{d-t}\frac{1}{n^{v\delta}}(d-t)^{2v}}_{\leq 2 \text{ for $n$ large enough}} \frac{1}{n^{\delta(s/2-t)\vee 0}}\notag\\
 &\leq 2(2k)^{2k-2d} \frac{1}{n^{\delta s/2}}\underbrace{\sum_{t=1}^{d\wedge(k-d)} \left(\frac{(2d)^2}{n^{\delta}}\right)^t}_{\leq 1 \text{ for $n$ large enough}}.\label{eq:SinglesNonContained}
 \end{align}
 Combining \eqref{eq:SinglesNonContained} and \eqref{eq:SinglesContained} together with \eqref{eq:dlower} yields
 \[
 \alpha_{\ell} \leq 4(2k)^{2k-2d}\frac{1}{n^{\delta s /2}} =  4(2k)^{(2k-2d)\wedge 2k}\frac{1}{n^{\delta s /2}} \leq 4(2k)^{(6(k-\ell) + 6 + 4s)\wedge 2k}\frac{1}{n^{\delta s/2}},
 \]
which is the statement of $b)$.

\subsection{Proof of Theorem~\ref{thm:rankoneexample}}
\label{sec:divergence}
Let $\delta>0$ and let for all $n\in\N$ and all $(i,j)\in\oneto{p}\times\oneto{n}$:
\begin{equation}
\label{eq:rankoneexample}
X_n(i,j) \defeq \sqrt{1-\frac{1}{n^{\delta}}} Y_n(i,j) + \sqrt{\frac{1}{n^{\delta}}}Z,
\end{equation}
where $Y_n$ is a $p\times n$ matrix with i.i.d.\ $\Ncal(0,1)$ distributed entries, $Z$ is independent of $\{Y_n(i,j)\,|\, i\in \oneto{p}, j\in\oneto{n}, n\in\N\}$ and also $\Ncal(0,1)$ distributed. Then $\E X_n(i,j)=0$, $\V X_n(i,j)=1$ and for $(i,j)\neq(k,l)$ we have $\E X_n(i,j)X_n(k,l) = 1/n^{\delta}$.

Let $\delta>0$ and denote by $(Z)_{a\times b}$ the $a\times b$ matrix with entry $Z$ at each place. Then
\begin{align*}
V_n &= \frac{1}{n}Y_nY_n^T + \frac{1}{n^{1+\delta}}Y_nY_n^T \notag\\
&\quad +\frac{1}{n}\sqrt{\left(1-\frac{1}{n^{\delta}}\right)\left(\frac{1}{n^{\delta}}\right)}\big(Y_n(Z)_{n\times p}+(Z)_{p\times n}Y_n^T\big) + \frac{1}{n^{\delta+1}}(Z)_{p\times n}(Z)_{n\times p} \notag\\
&=: A_n+B_n+C_n+D_n\,.
\end{align*}
Since the entries in $Y_n$ are i.i.d.\ Gaussian, it is well known that $\lim_{\nto} \opnorm{A_n}\to y_+$ almost surely, which also implies that $\lim_{\nto} \opnorm{B_n}\to 0$ almost surely.
Now we turn to $D_n$ and see that
\[
D_n = \frac{1}{n^{\delta+1}}(nZ^2)_{p\times p}=\frac{1}{n^{\delta}}(Z^2)_{p\times p}
\]
has eigenvalues $n^{-\delta} p Z^2$ and $0$, from which we deduce 
\[
\frac{n^{\delta}}{p} \opnorm{D_n}\ \xrightarrow[a.s.]{}\ 
Z^2\,, \qquad \nto\,,
\]
so in particular $n^{\delta-1}\opnorm{D_n}\to y Z^2$ almost surely.
Next, we will show that $C_n$ is negligble. To this end, we calculate
\[
Y_n(Z)_{n\times p}  = 
\begin{pmatrix}
\sum_{i=1}^n Y(1,i)Z &\ldots & \sum_{i=1}^n Y(1,i)Z\\
\sum_{i=1}^n Y(2,i)Z &\ldots & \sum_{i=1}^n Y(2,i)Z\\
\vdots &\ldots & \vdots\\
\sum_{i=1}^n Y(p,i)Z &\ldots & \sum_{i=1}^n Y(p,i)Z
\end{pmatrix},
\]
the eigenvalues of which are $0$ and the sum of the entries in the first column. Therefore,
\begin{align*}
&\opnorm{n^{-1}\sqrt{(1-n^{-\delta})n^{-\delta}}Y_n(Z)_{n\times p}}\\
& =\  |Z| \frac{1}{n}\sqrt{\left(1-\frac{1}{n^{\delta}}\right)\left(\frac{1}{n^{\delta}}\right)}\bigg|\sum_{(s,t)\in\oneto{p}\times\oneto{n}}Y_n(s,t)\bigg|\\
&\quad \sim |Z|\frac{1}{n^{1+\delta/2}}\bigg|\sum_{(s,t)\in\oneto{p}\times\oneto{n}}Y_n(s,t)\bigg|\\
&\quad \sim |Z|\frac{y^{1/2+\delta/4}}{(np)^{1/2+\delta/4}}\bigg|\sum_{(s,t)\in\oneto{p}\times\oneto{n}}Y_n(s,t)\bigg|\xrightarrow[n\to\infty]{} 0
\end{align*}
almost surely for any $\delta>0$, which follows from the LLN of Marcinkiewicz-Zygmund (see e.g.\ \cite{Durrett}). Similarly it follows that $\opnorm{n^{-1}\sqrt{(1-n^{-\delta})n^{-\delta}}(Z)_{p\times n}Y_n^T}\to 0$ almost surely, so that
\begin{equation}
\label{eq:ADsuffice}
\abs{\opnorm{V_n}- \opnorm{A_n+D_n}} \leq \opnorm{B_n} + \opnorm{C_n} \xrightarrow[n\to\infty]{} 0
\end{equation}
almost surely. Since $\opnorm{A_n} - \opnorm{D_n}\leq \opnorm{A_n + D_n} \leq \opnorm{A_n} + \opnorm{D_n}$ respectively $\opnorm{D_n}-\opnorm{A_n} \leq \opnorm{A_n + D_n}$, our findings immediately imply statements $i)$ respectively $ii)$ (of course, statement $i)$ also follows directly with part $i)$ of Theorem~\ref{thm:opnorm}).

Finally, we turn to the case where $\delta=1$ which is more involved because $\opnorm{A_n}$ and $\opnorm{D_n}$ are both of constant order. We have
\begin{equation*}
A_n+D_n= \frac{1}{n} \Big( Y_nY_n^T + Z^2 (1)_{p\times p}\Big)\,,
\end{equation*}
where $(1)_{p\times p}$ denotes the $p\times p$ matrix with all entries one. In order to analyze the largest eigenvalue of $A_n+D_n$, we note that for any orthogonal matrix $\mathcal{O}$ it holds
\begin{equation}\label{eq:usefuldd}
\lambda_{\max}(A_n+D_n)= \lambda_{\max}\big( \mathcal{O}(A_n+D_n)\mathcal{O}^T\big)\,.
\end{equation}
For any real symmetric matrix $M$, we denote
its spectral decomposition by
$M=\mathcal{O}_{M} \Lambda_{M} \mathcal{O}_{M}^{T}$,
where $ \Lambda_{M}$ is the diagonal matrix whose $i$-th diagonal element is the $i$-th largest eigenvalue of $M$, and $\mathcal{O}_{M}$ is an orthogonal matrix. Choosing $\mathcal{O}=\mathcal{O}_{(1)_{p\times p}}$ in \eqref{eq:usefuldd}, we get
\begin{equation*}
\lambda_{\max}(A_n+D_n)= \lambda_{\max}\bigg( 
\mathcal{O}_{(1)_{p\times p}}\Big(\frac{1}{n}  Y_nY_n^T\Big) \mathcal{O}_{(1)_{p\times p}}^T +Z^2 \frac{p}{n} (1,0)_{p\times p} \bigg)\,,
\end{equation*}
where $(1,0)_{p\times p}$ is the $p\times p$ matrix with $(1,1)$-entry equal to $1$ and all other entries zero. Since the matrix $Y_n$ is orthogonally invariant, that is $Y_n \eid \mathcal{O} Y_n$ for any orthogonal matrix $\mathcal{O}$ independent of $Y_n$, we conclude
\begin{equation}\label{eq:usefuld2}
\lambda_{\max}(A_n+D_n)\eid \lambda_{\max}\bigg( 
\frac{1}{n}  Y_nY_n^T +Z^2 \frac{p}{n} (1,0)_{p\times p} \bigg)\,.
\end{equation}
In order to study the right-hand side in \eqref{eq:usefuld2}, we introduce for $\eta\in \R$ the $(n+1)\times (n+1)$ diagonal matrices $Q$ with $Q(n+1,n+1)=\eta$ and $Q(j,j)=1$, $j=1, \ldots,n$. Further let $M$ be an $(n+1)\times p$ matrix of i.i.d.\ standard Gaussian random variables. Then it holds that $QMM^TQ$ and $M^TQ^2M$ have the same set of nonzero eigenvalues, and in particular $\lambda_{\max}(QMM^TQ)=\lambda_{\max}(M^TQ^2M)$.
By either \cite[Theorems 1.1-1.3]{baik:silverstein:2006} or \cite[Theorems 4.1 and 4.2]{bai:yao:2012} (compare also with Theorem 11.3 in \cite{yao:zheng:bai:2015}), 
we have, 
\begin{equation}\label{eq:A}
\frac{1}{p} \lambda_{\max}(QMM^TQ) \xrightarrow[n\to\infty]{\text{a.s.}}
\left\{\begin{array}{ll}
\left(1+\sqrt{y^{-1}}\right)^2 \,, & \mbox{if } \eta^2\le 1+ \sqrt{y^{-1}}, \\
\eta^2 \left(1+\frac{y^{-1}}{\eta^2-1}\right) \,, & \mbox{if } \eta^2> 1+ \sqrt{y^{-1}}.
\end{array}\right.
\end{equation}
Writing $m_1, \ldots, m_{n+1} \in \R^p$ for the columns of $M^T$ and using the notation $\mathcal{O}_{n+1}=\mathcal{O}_{m_{n+1} m_{n+1}^T}$, we get
\begin{align*}
\frac{1}{n}\lambda_{\max}(M^TQ^2M)&= \frac{1}{n}\lambda_{\max}\left( \sum_{i=1}^n m_i m_i^T +\eta^2 m_{n+1} m_{n+1}^T \right)\\
&= \frac{1}{n}\lambda_{\max}\left( \mathcal{O}_{n+1} \Big(\sum_{i=1}^n m_i m_i^T\Big) \mathcal{O}_{n+1}^T  +\eta^2  m_{n+1}^T m_{n+1} (1,0)_{p\times p} \right)\\
&= \frac{1}{n}\lambda_{\max}\left( \mathcal{O}_{n+1} \Big(\sum_{i=1}^n m_i m_i^T\Big) \mathcal{O}_{n+1}^T  +\eta^2  p (1,0)_{p\times p} \right) + o(1)\,,
\end{align*}
where in the last step we used that $n^{-1} (m_{n+1}^T m_{n+1} -p) \cas 0$ by the law of large numbers. Utilizing the rotational invariance of the normal distribution and the fact that $m_1,\ldots, m_n$ are independent from $m_{n+1}$, we conclude that 
$$\mathcal{O}_{n+1} \Big(\sum_{i=1}^n m_i m_i^T\Big) \mathcal{O}_{n+1}^T\eid \sum_{i=1}^n m_i m_i^T\,.$$
Together with \eqref{eq:A}, this implies, as $\nto$,
\begin{equation}\label{eq:B}
\lambda_{\max}\left( \frac{1}{n} \sum_{i=1}^n m_i m_i^T  +\eta^2  \frac{p}{n} (1,0)_{p\times p} \right) \cas
\left\{\begin{array}{ll}
\left(1+\sqrt{y}\right)^2 \,, & \mbox{if } \eta^2\le 1+ \sqrt{y^{-1}}, \\
\eta^2 \left(y+\frac{1}{\eta^2-1}\right) \,, & \mbox{if } \eta^2> 1+ \sqrt{y^{-1}}.
\end{array}\right.  
\end{equation}
Noting that $Y_nY_n^T\eid \sum_{i=1}^n m_i m_i^T$, we now deduce from \eqref{eq:usefuld2} and \eqref{eq:B} that, since $Y_nY_n^T$ is independent of $Z$,
\begin{equation}\label{eq:C}
\lambda_{\max}(A_n+D_n) \xrightarrow[n\to\infty]{\text{a.s.}}
\left\{\begin{array}{ll}
\left(1+\sqrt{y}\right)^2 \,, & \mbox{if } Z^2\le 1+ \sqrt{y^{-1}}, \\
Z^2 \left(y+\frac{1}{Z^2-1}\right) \,, & \mbox{if } Z^2> 1+ \sqrt{y^{-1}}.
\end{array}\right.  
\end{equation}
Thus, we obtain 
\begin{equation*}
\lambda_{\max}(A_n+D_n)\xrightarrow[n\to\infty]{\text{a.s.}} \left(1+\sqrt{y}\right)^2 \1_{\{Z^2\le 1+ \sqrt{y^{-1}}  \}} + Z^2 \left(y+\frac{1}{Z^2-1}\right) \1_{\{Z^2> 1+ \sqrt{y^{-1}}  \}}\,,
\end{equation*}
completing the proof.

\sloppy
\printbibliography

\vspace{1cm}

\noindent\textsf{(Michael Fleermann)\newline
FernUniversit\"at in Hagen\newline
Fakult\"at f\"ur Mathematik und Informatik\newline 
Universit\"atsstra\ss e 1\newline 
58084 Hagen}\newline
\textit{E-mail address:}
\texttt{michael.fleermann@fernuni-hagen.de}
\vspace{1cm}

\noindent\textsf{(Johannes Heiny)\newline
Ruhr-Universit\"at Bochum\newline
Fakult\"at f\"ur Mathematik\newline 
Universit\"atsstra\ss e 150\newline 
44801 Bochum}\newline
\textit{E-mail address:}
\texttt{johannes.heiny@rub.de}

\end{document}